\documentclass[12pt]{amsart}


\usepackage[
  margin=1in,
  includefoot,
  footskip=30pt,
]{geometry}

\usepackage{amsfonts}
\usepackage{mathrsfs}
\usepackage{amssymb}

\usepackage{amsthm}

\usepackage{enumitem}

\usepackage{longtable}

\usepackage{blkarray}

\usepackage{tikz-cd}

\usepackage{graphicx}

\usepackage{hyperref}
\usepackage{cleveref}

\theoremstyle{plain}
\newtheorem{theorem}{Theorem}[section]
\newtheorem{proposition}[theorem]{Proposition}
\newtheorem{lemma}[theorem]{Lemma}
\newtheorem{corollary}[theorem]{Corollary}

\theoremstyle{definition}
\newtheorem{definition}[theorem]{Definition}
\newtheorem{example}[theorem]{Example}

\def\op{\operatorname}
\renewcommand{\tilde}{\widetilde}

\def\A{\mathbf{A}}
\def\B{\underline{B}}
\def\P{\mathbf{P}}
\def\M{\mathbf{M}}
\def\E{\mathscr{E}}
\def\F{\mathscr{F}}

\def\L{\mathscr{L}}
\def\K{\mathscr{K}}
\def\O{\mathscr{O}}

\def\Spec{\op{Spec}}

\def\a{\underline{a}}
\def\aa{\underline{\alpha}}
\def\b{\underline{b}}
\def\bb{\underline{\beta}}
\def\c{\underline{c}}

\def\d{\underline{d}}

\def\Hom{\op{Hom}}
\def\Aut{\op{Aut}}

\def\sExt{\mathscr{E}\kern -.5pt xt}
\newcommand*{\sHom}{\mathscr{H}\kern -.5pt om}

\def\Betti{\mathcal{B}\kern -.5pt etti}
\def\VB{\mathcal{VB}_{\P^n}^\dagger}

\def\coker{\op{coker}}

\def\rank{\op{rank}}
\def\depth{\op{depth}}
\def\codim{\op{codim}}
\def\reg{\op{reg}}
\def\pdim{\op{pdim}}

\begin{document}

\title{Bundles on $\P^n$ with vanishing lower cohomologies}
\author{Mengyuan Zhang}
\address{Department of Mathematics, University of California, 
Berkeley, CA 94720}
\email{myzhang@berkeley.edu}

\begin{abstract}
We study bundles on projective spaces that have vanishing lower cohomologies using their short minimal free resolutions.
We partition the moduli $\M$ according to the Hilbert function $H$ and classify all possible Hilbert functions $H$ of such bundles. 
For each $H$, we describe a stratification of $\M_H$ by quotients of rational varieties.
We show that the closed strata form a graded lattice. 
\end{abstract}

\maketitle 

\section*{Introduction}
In this paper, we study bundles (i.e. locally free coherent sheaves) $\E$ on $\P^n$ such that
\begin{align*}
H^i(\E(t)) = 0,\quad  \forall t\in\mathbb{Z}, \quad \forall 1\le  i \le n-2. \tag{$\dagger$} \label{Cohomology}
\end{align*}
These include all bundles on $\P^2$ in particular.
Note that (\ref{Cohomology}) is an open condition on a family of bundles by the semicontinuity of cohomologies.
A rank $r$ bundle $\E$ on $\P^n$ satisfying (\ref{Cohomology}) admits a resolution by direct sums of line bundles of the form
\[
0 \to \bigoplus_{i = 1}^{l}\O_{\P^n}(-a_i) \xrightarrow{\varphi} \bigoplus_{i = 1}^{l+r}\O_{\P^n}(-b_i) \to \E \to 0.
\]
A minimal such resolution is unique up to isomorphism, and the integers $\a = (a_1,\dots, a_l)$ and $\b = (b_1,\dots, b_{l+r})$ are invariants of $\E$ called the Betti numbers.

\bigskip

The main results in this paper are the following.

\bigskip

In \Cref{FreeRes}, we classify all Betti numbers of rank $r$ bundles on $\P^n$ satisfying (\ref{Cohomology}), generalizing results from Bohnhorst and Spindler \cite{BS} for the case $r = n$.
Accordingly, we classify all possible Hilbert functions of such bundles, and introduce a compact way to represent and to generate them.
We show that there are only finitely many possible Betti numbers of bundles satisfying (\ref{Cohomology}) with fixed first Chern class and bounded regularity, generalizing the observation of Dionisi and Maggesi \cite{DM} for $r = n = 2$.
We then give examples to show that the semistability of such a bundle is not determined by its Betti numbers in general, in contrast to the case when $r = n$ discussed in \cite{BS}.

In \Cref{Stratification}, we define natural topologies on $\VB(H)$ and $\VB(\a,\b)$, the set of isomorphism classes of bundles on $\P^n$ satisfying (\ref{Cohomology}) with Hilbert function $H$ and with Betti numbers $(\a,\b)$ respectively.
The topologies are induced from the rational varieties of matrices whose ideals of maximal minors have maximal depth.
We show that all Betti numbers of bundles in $\VB(H)$ form a graded lattice under the partial order of canceling common terms.
This lattice is downward closed and infinite in general, where the subposet of Betti numbers up to any given regularity is a finite graded sublattice.
Finally, we describe the stratification of $\VB(H)$ by various subspaces $\VB(\a,\b)$.
We show that the closed strata intersect along another closed stratum, and that they form a graded lattice dual to the lattice of Betti numbers.
An open subset $\VB(H)^{ss}$ of $\VB(H)$ is a subscheme of the coarse moduli space $\mathcal{M}(\chi)$ of semistable torsion-free coherent sheaves with Hilbert polynomial $\chi$, similarly for an open subset $\VB(\a,\b)^{ss}$ of $\VB(\a,\b)$. 
The same description applies to the stratification of $\VB(H)^{ss}$ by $\VB(\a,\b)^{ss}$ on the level of topological spaces.

\bigskip
The study of vector bundles on algebraic varieties is central to algebraic geometry. 
In particular, the study of bundles on projective spaces already presents interesting challenges.
We do not attempt to give a survey of the subject here.
Instead, we provide some historical perspectives to motivate the investigations in this paper.

\bigskip


Maruyama \cite{Maruyama1} proved that the coarse moduli space of rank two semistable bundles on a smooth projective surface exists as a quasi-projective scheme.
In the same paper, it was shown that the coarse moduli space $\mathcal{M}_{\P^2}(2,c_1,c_2)^s$ of stable rank two bunldes on $\P^2$ with given Chern classes is smooth and irreducible.
Following this development, Barth \cite{Barth} showed that $\mathcal{M}_{\P^2}(2,c_1,c_2)^s$ is connected and rational for $c_1$ even, and Hulek \cite{Hulek} did the same for $c_1$ odd.
Their arguments contained a gap which was pointed out and partially fixed in \cite{Gap} and independently in \cite{ES}.
The existence of the coarse moduli space of semistable torsion-free sheaves of arbitrary rank on a smooth projective variety was finally established by Maruyama \cite{Maruyama}, see \cite{Maruyama2} for another exposition.

Despite the progress in the theory of moduli, there are many basic questions about bundles on projective spaces that are unanswered, see \cite{Problem} for a problem list.
For example, Hartshorne's conjecture \cite{Hartshorne} states that a rank $r$ bundle on $\P^n$ is the direct sum of line bundles when $r<\frac{1}{3}n$.
In particular, the conjecture predicts that rank two bundles on $\P^n$ are split when $n\ge 7$. 
On the other hand, the only known example (up to twists and finite pullbacks) of an indecomposable rank two bundle on $\P^4$ and above is the Horrocks-Mumford bundle \cite{Horrocks}.
It is fair to say that bundles of small rank on $\P^n$ remain mysterious.

\bigskip

This paper is motivated by two main objectives in the study of bundles.

(1) To classify certain invariants of bundles on $\P^n$.

To expand on this point, the Hilbert polynomial is an important invariant of a bundle that is constant in a connected flat family, and thus indexes the connected components of the moduli space (of some subclasses of bundles, e.g. semistable).
Note that the Hilbert polynomial can be computed from the Chern polynomial and vice versa.
Thus the classification of Hilbert polynomials is equivalent to the classification of Chern classes.
The Hilbert function eventually agrees with the Hilbert polynomial, and thus provides finer information.
Furthermore, the Hilbert function can be computed from the Betti numbers of a free resolution of (the section module of) a bundle.
Therefore the Betti numbers are even finer invariants of a bundle.
Consequently, the classification of Betti numbers of bundles will lead to a classification of the Hilbert functions and Hilbert polynomials (equivalently Chern classes).
In this paper, we take the first step by classifying the Betti numbers of bundles when their resolutions are short.
It turned out that this condition implies that these bundles have rank greater than the dimension of the ambient projective space with the exceptions of direct sums of line bundles.

(2) To provide examples of bundles with given invariants.

In the best scenario, the moduli space or the space of isomorphism classes of  bundles with given invariants is unirational, in which case the image of a random point in the projective space will give us a ``random" bundle with given invariants. 
For example, Barth's parametrization of $\mathcal{M}_{\P^2}(c_1,c_2)^{s}$ using nets of rank two quadrics \cite{Barth} allows us to produce ``random" rank two bundles on $\P^2$ with given Chern classes.
Here we can see the importance of using finer invariants.
Since $\M_{\P^2}(c_1,c_2)^{s}$ is irreducible, a general bundle produced using Barth's parametrization will be presented by a matrix of linear and quadratic polynomials by the main theorem in \cite{DM}.
Therefore producing a bundle that is presented by a matrix of forms of other degrees, which is special in the moduli $\mathcal{M}_{\P^2}(c_1,c_2)^{s}$, is like looking for a needle in a haystack.
On the other hand, if we stratify $\mathcal{M}_{\P^2}(c_1,c_2)^{s}$ using the finer invarints of Betti numbers $(\a,\b)$, then each piece $\mathcal{M}_{\P^2}(\a,\b)^{s}$ is still unirational and we can thus produce a ``random" bundle that is presented by a matrix of forms of given degrees whenever possible. 

\bigskip

The results in this paper are implemented in the Macaulay2 \cite{M2} package \texttt{BundlesOnPn} \cite{MZ}, which generates all Betti numbers of bundles satisfying (\ref{Cohomology}) up to bounded regularity as well as ``random" bundles with given Betti numbers.

\subsection*{Acknowledgement}
The author thanks his advisor David Eisenbud for support and for pointing out that the earlier versions of the results may be generalized.

\section{Free resolutions of bundles}\label{FreeRes}
Throughout, we work over an algebraically closed field $k$. 
We fix $R := k[x_0,\dots,x_n]$ to denote the polynomial ring of $\P^n$.
For a coherent sheaf $\F$ on $\P^n$, we write $H^i_*(\F)$ for the $R$-module  $\bigoplus_{t\in\mathbb{Z}}H^i(\F(t))$.
We write $\VB$ for the set of isomorphism classes of bundles on $\P^n$ satisfying (\ref{Cohomology}).

\bigskip

We start with a standard observation on the relation between the vanishing of lower cohomologies of a coherent sheaf and the projective dimension of its section module.

\begin{proposition}\label{Resolution}
Let $M$ be a finitely generated graded $R$-module.
Then $\pdim_R M \le 1$ iff $M \cong H^0_*(\tilde{M})$ and $H^i_*(\tilde{M}) = 0$ for all $1\le i\le n-2$.
\end{proposition}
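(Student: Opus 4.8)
The plan is to reduce both sides to a single condition on the graded local cohomology modules $H^i_{\m}(M)$, where $\m = (x_0,\dots,x_n)$, and then to match the two up index by index. Since $R = k[x_0,\dots,x_n]$ is regular of dimension $n+1$, every finitely generated $M$ has finite projective dimension and the Auslander--Buchsbaum formula $\pdim_R M + \depth_R M = n+1$ applies; the case $M = 0$ makes both sides of the proposition hold trivially, so I assume $M \ne 0$. Thus $\pdim_R M \le 1$ is equivalent to $\depth_R M \ge n$, and by Grothendieck's characterization $\depth_R M = \min\{\, i : H^i_{\m}(M) \ne 0 \,\}$ this is in turn equivalent to the vanishing $H^i_{\m}(M) = 0$ for all $0 \le i \le n-1$.

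The second ingredient is the standard comparison between local and sheaf cohomology for $\F := \tilde M$, which I would recall (or cite): there is a canonical exact sequence
\[
0 \to H^0_{\m}(M) \to M \xrightarrow{\alpha} H^0_*(\F) \to H^1_{\m}(M) \to 0,
\]
with $\alpha$ the natural map, together with isomorphisms $H^{i+1}_{\m}(M) \cong H^i_*(\F)$ for every $i \ge 1$. The four-term sequence shows that the canonical map $\alpha$ is an isomorphism precisely when $H^0_{\m}(M) = H^1_{\m}(M) = 0$, so the condition ``$M \cong H^0_*(\tilde M)$'' encodes exactly the vanishing in degrees $i = 0,1$. The isomorphisms convert the vanishing of $H^i_{\m}(M)$ for $2 \le i \le n-1$ into the vanishing of $H^{i-1}_*(\F)$, and as $i$ runs over $2,\dots,n-1$ the shifted index $i-1$ runs over $1,\dots,n-2$.

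Assembling these, the right-hand side of the proposition says exactly that $H^i_{\m}(M) = 0$ for $0 \le i \le n-1$: the isomorphism $M \cong H^0_*(\tilde M)$ supplies degrees $0$ and $1$, and the sheaf vanishings $H^j_*(\F) = 0$ for $1 \le j \le n-2$ supply degrees $2,\dots,n-1$ (for $n = 2$ the latter range is empty and the isomorphism alone gives $\depth_R M \ge 2$). This is precisely the depth condition $\depth_R M \ge n$ obtained in the first step, and since every implication above is reversible the equivalence follows.

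The only genuine content beyond bookkeeping is the comparison sequence and the degree-shifted isomorphisms $H^{i+1}_{\m}(M) \cong H^i_*(\F)$; this is where care is needed, since it is exactly the shift by one, together with the fact that $H^0_{\m}$ and $H^1_{\m}$ \emph{jointly} govern the isomorphism $M \cong H^0_*(\tilde M)$, that makes the sheaf-side range $1 \le j \le n-2$ line up with $\depth_R M \ge n$ on the module side. I expect no real obstacle once the comparison is in place: the remainder is a direct matching of index ranges.
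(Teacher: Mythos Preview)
Your proposal is correct and follows essentially the same approach as the paper: both use the four-term exact sequence and the shifted isomorphisms between local and sheaf cohomology, then convert the resulting local-cohomology vanishing into a depth bound via Grothendieck's criterion and finally into a projective-dimension bound via Auslander--Buchsbaum. Your write-up is simply more explicit about the index bookkeeping (and the trivial $M=0$ case) than the paper's terse version.
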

\begin{proof}
Let $H^i_m(-)$ denote the $i$-th local cohomology module supported at the homogeneous maximal ideal $m$ of $R$.
There is a four-term exact sequence
\[
0 \to H^0_m(M) \to M \to H^0_*(\tilde{M}) \to H^1_m(M) \to 0
\]
along with isomorphisms $H^{i+1}_m(M) \cong H^i_*(\tilde{M})$ for $1\le i\le n$.
By the vanishing criterion of local cohomology, we have $\depth M = \inf \{i \mid H^i_m(M) \ne 0\}$.
Finally, the Auslander-Buchsbaum formula states that $\pdim M = n+1-\depth M$.
The statement follows.
\end{proof}

\begin{definition}
Let $\E$ be a rank $r$ bundle on $\P^n$ satisfying (\ref{Cohomology}).
By \Cref{Resolution}, the $R$-module $H^0_*(\E)$ admits a unique (up to isomorphism) minimal graded free $R$-resolution 
\begin{align}
0 \to \bigoplus_{i = 1}^l R(-a_i) \xrightarrow{\phi} \bigoplus_{i = 1}^{l+r} R(-b_i) \to H^0_*(\E) \to 0. \tag{$*$}\label{ModuleRes}
\end{align}

We always arrange the numbers $a_1 \le \dots \le a_l$ and $b_1 \le \dots \le b_{l+r}$ in \textbf{ascending} order, and write $\a$ and $\b$ for brevity.
We call $(\a,\b)$ \emph{the Betti numbers} of $\E$.
\end{definition}

Note that $\E$ is isomorphic to a direct sum of line bundles iff $H^0_*(\E)$ is a free $R$-module iff $l = 0$ and the sequence $\a$ is empty.

\bigskip

The resolution (\ref{ModuleRes}) of graded $R$-modules sheafifies to a resolution
\begin{align}
0 \to \bigoplus_{i = 1}^l \O_{\P^n}(-a_i) \xrightarrow{\varphi} \bigoplus_{i = 1}^{l+r} \O_{\P^n}(-b_i) \to \E \to 0 \tag{$\star$}\label{SheafRes}
\end{align}
of $\E$ by direct sums of line bundles. 
Conversely, a resolution (\ref{SheafRes}) of $\E$ by direct sums of line bundles gives rise to a free resolution (\ref{ModuleRes}) of the $R$-module $H^0_*(\E)$ under the functor $H^0_*(-)$. 
With this understanding, we shall speak of these two resolutions of modules and sheaves interchangeably.
In particular, the morphism $\varphi$ is called \emph{minimal} iff the corresponding map of $R$-modules $\phi$ is minimal, i.e. $\phi\otimes_R k = 0$.

\bigskip

\subsection{Betti numbers}

In this subsection we classify the Betti numbers of bundles in $\VB$.

\bigskip

For a pair $(\a,\b)$ of finite sequences of integers in ascending order, we write $\VB(\a,\b)$ for the set of isomorphism classes of bundles with Betti numbers $(\a,\b)$.
For a sequence of integers $\d := (d_1, \dots, d_l)$, we define
\[
L(\d):= \bigoplus_{i = 1}^l R(-d_i) \text{ and } \L(\d) := \bigoplus_{i = 1}^l \O_{\P^n}(-d_i).
\]

\begin{theorem}\label{Betti}
Let $\a = (a_1, \dots, a_l)$ and $\b = (b_1, \dots, b_{l+r})$ be two sequences of integers in ascending order for some $l\ge 0$ and $r > 0$. 
The set $\VB(\a,\b)$ is nonempty iff $\a$ is empty or 
\begin{align*}
r \ge n \text{ and } a_i > b_{n+i} \text{ for } i = 1,\dots, l. \tag{A}\label{Admissible}
\end{align*}
In this case, the cokernel of $\varphi$ represents the class of a bundle in $\VB(\a,\b)$ for a general minimal map $\varphi \in \Hom(\L(\a),\L(\b))$.
\end{theorem}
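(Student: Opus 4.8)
The plan is to translate the statement into a question about the ideal of maximal minors. Recall that since $\varphi$ is minimal its $(i,j)$ entry is a form of degree $a_j - b_i$, which vanishes unless $a_j > b_i$. The cokernel $\coker\varphi$ is a vector bundle of rank $r$ precisely when $\varphi$ is a fiberwise injective map of bundles, i.e. when the ideal $I_l(\varphi)\subseteq R$ of maximal ($l\times l$) minors cuts out the empty set in $\P^n$, equivalently $\codim_R I_l(\varphi)=n+1$. Granting this, the rest is bookkeeping: applying $H^0_*(-)$ to the sheaf sequence $0\to\L(\a)\xrightarrow{\varphi}\L(\b)\to\coker\varphi\to0$ and using $H^1_*(\L(\a))=0$ recovers the module resolution $(*)$, which is minimal because $\varphi$ is, so $\coker\varphi$ lies in $\VB(\a,\b)$ by \Cref{Resolution}. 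The empty-$\a$ case is the split bundle $\L(\b)$, so I focus on the equivalence of nonemptiness with (A) when $l\ge1$.

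For necessity, suppose $\coker\varphi$ is a bundle, so $\codim_R I_l(\varphi)=n+1$. The classical height bound for determinantal ideals gives $\codim_R I_l(\varphi)\le r+1$ (the generic codimension of the maximal-minor locus of an $l\times(l+r)$ matrix), forcing $r\ge n$. For the degree inequalities I would argue by contradiction: if $a_i\le b_{n+i}$ for some $i$, then minimality together with the ascending orderings forces the lower-left block of $\varphi$ (rows $n+i,\dots,l+r$, columns $1,\dots,i$) to vanish, since those entries have degree $a_j-b_{i'}\le a_i-b_{n+i}\le0$. A Laplace expansion along the first $i$ columns then shows $I_l(\varphi)\subseteq I_i(A)$, where $A$ is the $(n+i-1)\times i$ top-left block; the same height bound gives $\codim_R I_i(A)\le n$, whence $\codim_R I_l(\varphi)\le n<n+1$, a contradiction.

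For sufficiency I would prove that a general minimal $\varphi$ has $\coker\varphi$ locally free by a dimension count on the incidence variety $Z=\{(\varphi,p):\rank\varphi(p)<l\}\subseteq W\times\P^n$, where $W=\bigoplus_{a_j>b_i}H^0(\O(a_j-b_i))$ is the space of minimal maps. The evaluation $W\to V_p$ at a point $p$ is surjective onto the space $V_p$ of ``staircase'' matrices whose $(i,j)$ entry is free exactly when $a_j>b_i$; by (A) the $j$-th column of $V_p$ is free in its first $\beta_j:=\#\{i:b_i<a_j\}\ge n+j$ rows. The heart of the argument is the estimate $\codim_{V_p}\{\rank<l\}\ge n+1$, which I would obtain from the incidence $\{(M,[\lambda]):M\lambda=0\}\subseteq V_p\times\P^{l-1}$: over a dependency vector whose largest nonzero coordinate is $j^*$, the relation $M\lambda=0$ imposes $\beta_{j^*}\ge n+j^*$ independent linear conditions on $V_p$, so the corresponding stratum has dimension at most $(j^*-1)+(\dim V_p-\beta_{j^*})\le\dim V_p-(n+1)$. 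Combining the strata over all $p$ then gives $\dim Z\le n+(\dim V_p-(n+1))=\dim W-1$, so the closed image of $Z\to W$ (closed by properness of $\P^n$) is proper and avoided by a general $\varphi$; for such $\varphi$, $I_l(\varphi)$ is empty on $\P^n$ and $\coker\varphi\in\VB(\a,\b)$ by the first paragraph.

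The main obstacle is the codimension estimate for rank-deficiency loci of the support-constrained matrices $V_p$: ordinary generic-determinantal bounds do not apply directly, and one must exploit that condition (A) is exactly the family of inequalities $\beta_j\ge n+j$ that makes the per-$[\lambda]$ count succeed. Everything else—the reduction to $I_l(\varphi)$, the minimality bookkeeping, and the passage from a nonempty open locus to the word ``general''—is comparatively routine, the openness of $\{\varphi:\coker\varphi\text{ locally free}\}$ again following from properness of $\P^n$.
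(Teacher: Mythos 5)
Your proposal is correct, but it replaces both halves of the paper's argument with genuinely different (and arguably more elementary) ones. For necessity, the paper also locates the same forced zero block of size $(l+r-n-i+1)\times i$, but then bounds $\codim I_l(\varphi)$ by invoking the Giusti--Merle formula for generic matrices with a prescribed zero block and transferring it to $R$ via Serre's superheight theorem for prime ideals in regular local rings (\Cref{DepthZeros}); your Laplace expansion along the first $i$ columns, giving $I_l(\varphi)\subseteq I_i(A)$ with $A$ the $(n+i-1)\times i$ upper-left block and then applying only the classical Eagon--Northcott height bound for maximal minors, reaches the same contradiction $\codim I_l(\varphi)\le n$ with much less machinery (one should just note that $I_i(A)$ is proper, which follows from minimality). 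For existence, the paper writes down an explicit $\varphi$ whose columns carry shifted diagonal bands of powers $x_j^{a_i-b_{j+i}}$, checks it drops rank nowhere, and then upgrades ``nonempty'' to ``general'' via the openness of the depth condition proved through the Eagon--Northcott complex (\Cref{DepthOpen}); your incidence-variety dimension count over $Z=\{(\varphi,p):\rank\varphi(p)<l\}$, with the stratification of dependency vectors by their top nonzero coordinate $j^*$ and the estimate $\beta_{j^*}\ge n+j^*$ coming exactly from condition (\ref{Admissible}), proves the ``general'' statement directly in one stroke (this is essentially the Bohnhorst--Spindler argument for $r=n$, which the paper cites but does not reuse). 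The trade-offs: the paper's explicit matrix is constructive and is what one would implement, and its \Cref{DepthOpen} is reused later for the stratification results, whereas your route is self-contained and avoids Giusti--Merle and Serre entirely; your codimension estimate for the support-constrained matrices $V_p$, which you flag as the main obstacle, is in fact complete as sketched since the $\beta_{j^*}$ row conditions involve disjoint coordinates and are therefore independent.
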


This generalizes the results of Bohnhorst and Spindler \cite{BS} for $r = n$.
Likewise, we say a pair of ascending sequences of integers $(\a,\b)$ is \emph{admissible} if it satisfies the equivalent conditions of \Cref{Betti}.

\bigskip

The fact that a bundle $\E$ satisfying (\ref{Cohomology}) that is not a direct sum of line bundles must have rank $r\ge n$ also follows from the Evans-Griffith splitting criterion \cite[Theorem 2.4]{EG}.

\bigskip

In order to prove \Cref{Betti}, we need two lemmas regarding depth of minors of matrices.

\bigskip

Let $S$ denote a noetherian ring and let $\phi:S^p \to S^q$ be a map between two free $S$-modules.
For any integer $r$, the ideal $I_r(\phi)$ of $(r\times r)$-minors of $\phi$ is defined as the image of the map $\wedge^r S^p \otimes_S (\wedge^r S^q)^* \to S$, which is induced by the map $\wedge^r\phi:\wedge^r S^p \to \wedge^r S^q$.

Similarly, let $\varphi: \bigoplus_{i = 1}^p\O_{\P^n_A}(-a_i) \to \bigoplus_{i = 1}^q \O_{\P^n_A}(-b_i)$ be a morphism of sheaves on $\P^n_A$ over a noetherian ring $A$. Set $S:=A[x_0,\dots, x_n]$ and let $\phi: \bigoplus_{i = 1}^p S(-a_i) \to \bigoplus_{i = 1}^q S(-b_i)$ denote the corresponding morphism of graded free $S$-modules given by $H^0_*(\varphi)$. 
For any integer $r$, we define $I_r(\varphi) = I_r(\phi)$ as an ideal in $S$.

The depth of a proper ideal $I$ in a noetherian ring $S$ is defined to be the length of a maximal regular sequence in $I$. 
The depth of the unit ideal is by convention $+\infty$.
Recall that if $S$ is Cohen-Macaulay, then $\depth I = \codim I$ for every proper ideal $I$.

\begin{lemma}\label{DepthOpen}
Let $A$ be a finitely generated integral domain over $k$, and let $S$ be a finitely generated $A$-algebra. 
Suppose $\phi:S^q \to S^p$ is a morphism of free $S$-modules with $p\ge q$.
For a prime $P$ of $A$, let $\phi_P$ denote the morphism $\phi\otimes_A k(P)$ of free modules over the fiber ring $S\otimes_A k(P)$. 
For any integer $d$, the set of primes $P$ in $A$ such that $\depth I_q(\phi_P) \ge d$ is open in $A$.
\end{lemma}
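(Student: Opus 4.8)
The plan is to convert the depth condition into the vanishing of certain homology modules of a complex that commutes with base change, and then to prove that such fibrewise vanishing is an open condition on $\Spec A$.

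First I would make the homological translation. Set $c := p-q+1$, the maximal possible value of $\depth I_q$. The Eagon--Northcott complex $\mathrm{EN}(\phi_P)$ of the map $\phi_P$ is depth--sensitive: by the Buchsbaum--Eisenbud acyclicity criterion one has, for every $P$,
\[
\depth I_q(\phi_P)\ge d \iff H_i\big(\mathrm{EN}(\phi_P)\big)=0 \text{ for all } i>c-d.
\]
Because $\mathrm{EN}(-)$ is built functorially from a map of free modules by multilinear algebra, it commutes with base change, so $\mathrm{EN}(\phi_P)=\mathrm{EN}(\phi)\otimes_A k(P)$. Writing $C_\bullet:=\mathrm{EN}(\phi)$, a bounded complex of finitely generated free $S$-modules, and $m:=c-d+1$, the lemma reduces to the assertion that
\[
V:=\{\,P\mid H_i(C_\bullet\otimes_A k(P))=0 \text{ for all } i\ge m\,\}
\]
is open in $\Spec A$.

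To prove $V$ open I would verify that it is constructible and stable under generization. For constructibility I would use generic freeness: applied to the terms, cycles, boundaries and homologies of $C_\bullet$ (all finitely generated $S$-modules), it yields a dense open $U\subseteq\Spec A$ over which these modules are free over $A$. Over $U$ the formation of homology commutes with base change, so $H_i(C_\bullet\otimes_A k(P))=H_i(C_\bullet)\otimes_A k(P)$ and the defining condition of $V$ is constant on $U$; running this over each integral closed subscheme $\Spec(A/\mathfrak q)$ --- using that $C_\bullet$ commutes with the base change $A\to A/\mathfrak q$ --- and applying Noetherian induction exhibits $V$ as a finite union of constructible pieces.

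The main obstacle is stability under generization, equivalently that the locus $\depth I_q(\phi_P)<d$ is stable under specialization. By the valuative criterion it suffices to take $A$ a discrete valuation ring and to show that the generic fibre has grade at least that of the closed fibre. Geometrically this is the statement that specializing the coefficients of $\phi$ can only increase the dimension of the zero locus of the maximal minors, hence can only decrease the grade of $I_q$; I would make this precise by reducing, through the homological translation above, to the fact that the relevant top cycle module $\ker\partial_s$ is a submodule of a free $S$-module and is therefore $S$-torsion-free, so that vanishing of its closed fibre forces the module itself to vanish by a Nakayama argument, and then propagating this down the complex. A constructible subset of $\Spec A$ that is stable under generization is open, which completes the proof.

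Finally, in the graded situation of our applications, where $S=A[x_0,\dots,x_n]$ has Cohen--Macaulay fibres, the whole argument collapses to a clean one: there $\depth I_q(\phi_P)=\codim I_q(\phi_P)=n-\dim V_+(I_q(\phi_P))$, and since the projection $V_+(I_q(\phi))\subseteq\P^n_A\to\Spec A$ is proper, the upper semicontinuity of fibre dimension shows directly that $\{\,P\mid\dim V_+(I_q(\phi_P))\le n-d\,\}=\{\,P\mid\depth I_q(\phi_P)\ge d\,\}$ is open.
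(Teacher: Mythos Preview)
Your approach coincides with the paper's: pass to the Eagon--Northcott complex, use its depth-sensitivity to translate $\depth I_q(\phi_P)\ge d$ into a fibrewise acyclicity condition, and then argue that this acyclicity locus is open on $\Spec A$. The paper dispatches the last step by citing EGA~IV~9.4.2, whereas you attempt a direct proof via constructibility plus stability under generization, and then give a separate argument for the graded case.

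The constructibility argument is fine. The generization step, however, has a real gap. You claim that the top cycle module $\ker\partial_s\subseteq C_s$ is ``$S$-torsion-free'' and that ``vanishing of its closed fibre forces the module itself to vanish by a Nakayama argument.'' Two problems: the hypothesis you possess is $H_i(C_\bullet\otimes_A k)=0$, not $H_i(C_\bullet)\otimes_A k=0$, and these differ without a flatness assumption; and even granting the latter, Nakayama over the DVR $A$ would require finite generation over $A$, which a finitely generated $S$-module does not enjoy. What is actually needed to run any such argument (and what the EGA reference implicitly uses) is that the terms $C_i$ be $A$-flat, i.e.\ that $S$ be $A$-flat; without this the statement itself can fail --- take $A=k[t]$, $S=k[t,x]/(tx)$, and $\phi$ multiplication by $x$: then $\depth I_1(\phi_P)=1$ only at $P=(t)$ and is $0$ elsewhere. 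In the paper's applications $S=A[x_0,\dots,x_n]$ is free over $A$, so this issue never surfaces.

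Your closing paragraph, handling the graded polynomial case via $\depth=\codim$ on Cohen--Macaulay fibres together with upper semicontinuity of fibre dimension for the proper projection $V_+(I_q(\phi))\subseteq\P^n_A\to\Spec A$, is correct and self-contained, and is arguably the cleanest route for every use of the lemma in the paper.
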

\begin{proof}
Note that $I_q(\phi) = I_q(\phi^*)$. 
Let $\K_\bullet(\phi^*)$ be the Eagon-Northcott complex associated to $\phi^*$ as in \cite{EN}. 
Note that the formation of the Eagon-Northcott complex is compatible with taking fibers, i.e. $\K_{\bullet}(\phi^*)\otimes_A k(P) = \K_{\bullet}(\phi^*\otimes_A k(P))$.
For each prime ideal $P$ of $A$, we have $\depth I_q(\phi^*_P) \ge d$ iff $\K_\bullet(\phi^*)\otimes_A k(P)$ is exact after position $p-q+1-d$ by the main theorem in \cite{EN}.
The statement of the lemma follows from the general fact that the exactness locus of a family of complexes is open, see E.G.A IV 9.4.2 \cite{EGA}.
\end{proof}

\begin{lemma}\label{DepthZeros}
Let $S$ be a standard graded finitely generated $k$-algebra. 
Let $\phi:\bigoplus_{i =1}^q S(-a_i) \to \bigoplus_{i = 1}^p S(-b_i)$ be a morphism of graded free $S$-modules with $p\ge q$, and assume that $\phi$ is minimal, i.e. $\phi\otimes_S k = 0$. 
Suppose that relative to some bases, the matrix of $\phi$ has a block of zeros of size $u\times v$. 
Then $\codim I_q(\phi) \le p-q+1-\inf(u+v,p+1)+\inf(u+v,q)$.
\end{lemma}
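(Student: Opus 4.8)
The plan is to reduce the estimate to the classical codimension bound for ideals of maximal minors, applied to two different matrices. First I would permute the rows and columns of $\phi$ so that the $u \times v$ block of zeros sits in the upper-left corner; this only permutes the maximal minors and changes their signs, so it leaves $I_q(\phi)$, and hence $\codim I_q(\phi)$, unchanged, and it preserves minimality. After this reduction I write $\phi$ in block form
\[
\phi = \begin{pmatrix} 0 & B \\ C & D \end{pmatrix},
\]
where $0$ is the $u \times v$ zero block, $C$ is the $(p-u) \times v$ block directly beneath it, and $B, D$ are the remaining blocks. Because $\phi$ is minimal, every entry lies in the graded maximal ideal $\m$, so every $t \times t$ minor lies in $\m$; in particular $I_q(\phi)$ and $I_v(C)$ are proper ideals, which is exactly what is needed in order to apply the classical bound $\codim I_t(\psi) \le (\#\text{rows} - t + 1)(\#\text{cols} - t + 1)$ for a proper ideal of $t\times t$ minors (this follows from the perfection of generic determinantal ideals, cf.\ \cite{EN}).

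Two estimates then bracket $\codim I_q(\phi)$. The first is immediate: applying the classical bound to the $p \times q$ matrix $\phi$ with $t = q$ gives $\codim I_q(\phi) \le p - q + 1$. The second, and the key geometric input, comes from the submatrix $C$. The first $v$ columns of $\phi$ are supported in the bottom $p-u$ coordinates, so at any prime $P$ their images span a space of dimension exactly $\rank(C \bmod P)$, while the remaining $q - v$ columns span a space of dimension at most $q - v$. Hence at every prime $P$ with $\rank(C \bmod P) < v$ we get $\rank(\phi \bmod P) < q$, that is,
\[
V(I_v(C)) \subseteq V(I_q(\phi)).
\]
A larger vanishing locus has smaller codimension, so $\codim I_q(\phi) \le \codim I_v(C)$. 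When $v \le p - u$ the $v \times v$ minors are the maximal minors of the $(p-u)\times v$ matrix $C$, and the classical bound gives $\codim I_v(C) \le p - u - v + 1$. When $v > p - u$ (equivalently $u + v > p$) the matrix $C$ has fewer than $v$ rows, so $\rank(C \bmod P) < v$ for every $P$; then $V(I_q(\phi))$ is all of $\Spec S$ and $\codim I_q(\phi) = 0$.

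Combining these, I would conclude $\codim I_q(\phi) \le \min(p - q + 1,\ p - u - v + 1)$ when $u + v \le p$, and $\codim I_q(\phi) = 0$ when $u + v > p$. Writing $s = u + v$, a short case check against the ranges $s \le q$, $q \le s \le p+1$, and $s \ge p+1$ shows that this quantity equals $p - q + 1 - \inf(s, p+1) + \inf(s, q)$, which is the asserted bound. The step that requires genuine care is the block/rank reduction to $C$: one must recognize that the zero block forces the rank drop not through the full matrix but through the single subblock sitting directly below it, and one must treat the degenerate regime $u + v > p$ — where $C$ is too short to carry a $v\times v$ minor and the minor ideal collapses to height zero — as a separate case. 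The remaining reconciliation of the two linear bounds with the $\inf$-expression is purely arithmetic bookkeeping.
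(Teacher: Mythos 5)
Your proof is correct, but it takes a genuinely different route from the paper's. The paper invokes Giusti--Merle for the \emph{exact} codimension of the ideal of maximal minors of the generic matrix with a prescribed zero block, and then transfers that value to an arbitrary standard graded $k$-algebra as an upper bound via Serre's theorem on the superheight of prime ideals in a regular local ring (minimality guarantees that the classifying map from the generic situation localizes, and specialization can only drop codimension). You bypass both external inputs: after moving the zero block to the corner, the rank inequality $\rank(\phi \bmod P)\le \rank(C\bmod P)+(q-v)$ gives $V(I_v(C))\subseteq V(I_q(\phi))$, so everything reduces to the classical determinantal height bound applied to $\phi$ and to the subblock $C$, together with the arithmetic identity $\min(p-q+1,\ p-u-v+1)=p-q+1-\inf(u+v,p+1)+\inf(u+v,q)$ and the degenerate case $u+v>p$, where every maximal minor vanishes identically because a $q\times q$ minor would have to place $v$ columns in the $p-u<v$ available rows. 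This is more elementary and self-contained, relying only on the height bound for maximal minors that the paper already uses elsewhere; what it gives up is the information that the bound is attained for the generic blocked matrix, which the lemma does not need. One point to make precise: since $S$ is not assumed Cohen--Macaulay, you need the \emph{height} form of the classical bound for determinantal ideals over an arbitrary Noetherian ring (Eagon's theorem; see Bruns--Vetter, \emph{Determinantal Rings}, Theorem 2.1), not merely the grade bound that is the literal main theorem of \cite{EN}, as grade only bounds height from below. That result is standard, so this is a matter of citation rather than a gap.
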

\begin{proof}
For the case of generic matrices over a field, this is a result of Giusti-Merle \cite{GM} .
We fix, once for all, bases of the domain and target of $\phi$, and let $Z\subset \{1,\dots, p\} \times\{1,\dots, q\}$ be the $u\times v$ rectangle where the matrix of $\phi$ has zero entries.
Consider the polynomial ring  $A := k\left[\{x_{ij}\}^{1\le i\le p}_ {1\le j\le q}\right]/(x_{ij} \mid (i,j)\in Z)$, which is the coordinate ring of the affine space of $(p\times q)$-matrices with a zero block of size $u\times v$ in position $Z$.
Let $\psi:A(-1)^q\to A^p$ be the morphism given by the generic matrix $(x_{ij})$.
Then $\codim I_q(\psi) = p-q+1-\inf(u+v,p+1)+\inf(u+v,q)$ by \cite[Theorem 1.3]{GM}. 

The general case follows from Serre's result on the superheight of prime ideals in a regular local ring. 
The map $\phi$ corresponds to a morphism of $k$-algebras $A\to S$, where $x_{ij}$ is sent to the entry of the matrix of $\phi$ relative to the fixed bases.
In particular, note that $I_q(\phi) = SI_q(\psi)$.
Let $m$ and $m'$ denote the homogeneous maximal ideals of $A$ and $S$ respectively.
Since all entries of $\phi$ are in $m'$ by assumption, we have an induced morphism on the localizations $A_m \to S_{m'}$ where $S_{m'}m\subset m'$.
Let $P$ be a prime above $A_mI_q(\psi)$ of the least codimension.
Since $S_{m'}P\subset m'$,  Serre's result on superheight on prime ideals in a regular local ring \cite{Serre} implies that $\codim S_{m'}P \le \codim P$.
Now $I_q(\psi)$ and $SI_q(\psi)$ are homogeneous, and $S_{m'}I_q(\psi)\subset S_{m'}P$, therefore we conclude that
\begin{align*}
\codim SI_q(\psi) & =  \codim S_{m'}I_q(\psi) \\
& \le \codim S_{m'}P \\
& \le \codim P\\
&= \codim A_mI_q(\psi) \\
&= \codim I_q(\psi)\\
& = p-q+1-\inf(u+v,p+1)+\inf(u+v,q). \qedhere
\end{align*}
\end{proof}

\bigskip

The following is a simple fact that allows us to translate between bundles and homogeneous matrices whose ideals of maximal minors have maximal depth.

\begin{proposition}\label{Bundle}
Let $\a = (a_1,\dots, a_l)$ and $\b = (b_1,\dots, b_{l+r})$ for some $l>0$ and $r\ge 0$. 
For a map $\varphi \in \Hom(\L(\a),\L(\b))$, the cokernel of $\varphi$ is a rank $r$ bundle on $\P^n$ iff $\depth I_l(\varphi) \ge n+1$.
In this case, we have a resolution of $\E :=\coker \varphi$ by direct sums of line bundles
\[
0 \to \L(\a) \xrightarrow{\varphi} \L(\b) \to \E \to 0.
\]
\end{proposition}
\begin{proof}
The rank of $\coker \varphi$ is $r$ iff $I_r(\varphi)$ is nonzero iff $\varphi$ is injective at the generic point of $\P^n$ iff $\varphi$ is injective.
The ideal $I_r(\varphi)$ cuts out points on $\P^n$ where $\coker \varphi$ is not locally free of rank $r$.
Thus $\coker \varphi$ is a rank $r$ bundle iff $I_r(\varphi)$ is the unit ideal or is $m$-primary, where $m$ is the homogeneous maximal ideal of $R$. 
In either case $\depth I_r(\varphi) \ge n+1$.
\end{proof}

\begin{proof}[Proof of \Cref{Betti}]
If $\a$ is empty, then $\E  := \L(\b)$ has Betti numbers $(\a,\b)$. 
Suppose $\a$ is nonempty and $(\a,\b)$ satisfies condition (\ref{Admissible}). Consider the minimal map $\varphi:\L(\a) \to \L(\b)$ given by the following matrix
\[
\begin{blockarray}{ccccc}
& a_1 & \cdots & a_l & \\
\begin{block}{c(ccc)c}
b_1 & x_0^{a_1-b_1} & 0 & 0 & b_1\\
\vdots & \vdots &  \ddots & 0 &  \vdots \\
\vdots & \vdots &   & x_0^{a_l-b_l} & b_l \\
b_{n+1} & x_n^{a_1-b_{n+1}} &  & \vdots & \vdots \\
\vdots & 0 & \ddots & \vdots & \vdots \\
\vdots & 0 & 0 & x_n^{a_l-b_{l+n}} & b_{l+n}\\
\vdots & 0 & 0 & 0 & \vdots\\
b_{l+r} & 0 & 0 & 0  & b_{l+r}.\\
\end{block}
\end{blockarray}
\]
Since $\varphi$ drops rank nowhere on $\P^n$, we conclude that $\E := \coker \varphi$ is a rank $r$ bundle with a resolution by direct sums of line bundles
\[
0 \to \L(\a) \xrightarrow{\varphi} \L(\b) \to \E \to 0
\]
by \Cref{Bundle}.
Since $\varphi$ is minimal, it follows from \Cref{Resolution} that $\E \in \VB(\a,\b)$.

Conversely, suppose $\VB(\a,\b)$ is nonempty and $\a$ is nonempty.
Then there is a minimal map $\varphi \in \Hom(\L(\a),\L(\b))$  where $\coker \varphi$ is a rank $r$ bundle $\E$. 
Since $\varphi$ is minimal, it follows that $I_l(\varphi) \subset I_1(\varphi) \subset m$ is a proper ideal. 
By \Cref{Bundle}, we have $\depth I_l(\varphi) = n+1$. 
By the main theorem in \cite{EN}, we have $\depth I_l(\varphi) \le l+r-l+1 = r+1$.
It follows that we must have $r\ge n$.
Now suppose on the contrary that there is an index $1\le i\le l$ where $a_i \le b_{n+i}$. 
Since $\varphi$ is minimal, we see that the $(n+i,i)$-th entry in the matrix of $\varphi$ must be zero. 
In fact, since $\a$ and $\b$ are in ascending order, we must have a block of zeros
of size $(l+r-n-i+1)\times i$ as the following
\[
\begin{blockarray}{cccccc}
& a_1 & \cdots & a_i & \cdots & a_l  \\
\begin{block}{c(ccccc)}
b_1 &  & & &  &  \\
\vdots &  &   & & &   \\
\vdots &  &  & & &   \\
b_{n+i} & 0 & \cdots & 0 &  &  \\
\vdots & \vdots & & \vdots & & \\
\vdots & \vdots &  & \vdots & & \\
\vdots & \vdots &  & \vdots & & \\
b_{l+r} & 0 & \cdots & 0 & & \\
\end{block}
\end{blockarray}.
\]
By \Cref{DepthZeros}, we conclude that 
\begin{align*}
\depth I_l(\varphi) & \le l+r-l+1-\inf(l+r-n+1,l+r+1)+\inf(l+r-n+1,l)\\
& = r+1-(l+r-n+1)+l\\
& = n.
\end{align*}
This is a contradiction to the fact that $\depth I_l(\varphi) = n+1$.

Now we prove the last statement.
It is obvious when $\a$ is empty, so we assume $\a$ is nonempty. 
The set $\Hom(\L(\a),\L(\b))$ has the structure of the closed points of an affine space $\A^N$.
The subset of minimal maps is an affine subspace $\A^M$.
There is a tautological morphism $\Phi: \bigoplus_{i = 1}^l \O_{\P^n\times A^M}(-a_i) \to \bigoplus_{i = 1}^{l+r} \O_{\P^n\times \A^M}(-b_i)$, where the fiber $\Phi_P$ for a closed point $P$ of $\A^M$ is given by the minimal map that $P$ corresponds to.
By \Cref{DepthOpen}, the set $U$ of points in $\A^M$ where $\depth I_l(\Phi_P) \ge n+1$ is open. 
Since there is a morphism $\varphi \in \Hom(\L(\a),\L(\b))$ whose cokernel is a bundle $\E \in \VB(\a,\b)$, by \Cref{Bundle} the map $\varphi$ corresponds to a closed point in $U$.
It follows that $U$ is open and dense in $\A^M$.
\end{proof}

Recall that the category of bundles on $\P^n$ is a Krull-Schmidt category \cite{Atiyah}, i.e. every bundle $\E$ admits a decomposition $\E\cong \E_0\oplus \L$, unique up to isomorphism, where $\L$ is the direct sum of line bundles and $\E_0$ has no line bundle summands. 

\begin{corollary}\label{FreeRank}
Let $\E \in \VB(\a,\b)$ for some $\a$ nonempty.
If $\E \cong \E_0\oplus \L$ is the Krull-Schmidt decomposition of $\E$, then
$n\le \rank \E_0 \le \max\{j\mid a_l > b_{l+j}\}$.
\end{corollary}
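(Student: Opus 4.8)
The plan is to prove the two inequalities separately. For the lower bound, I would pass to the summand $\E_0$ and invoke \Cref{Betti}. Since cohomology is additive over direct sums, the decomposition $\E\cong\E_0\oplus\L$ shows that $\E_0$ again satisfies (\ref{Cohomology}), so $\E_0\in\VB$. Because $\a$ is nonempty, $\E$ is not a direct sum of line bundles, and since $\L$ absorbs all line bundle summands, $\E_0$ is a \emph{nonzero} bundle with no line bundle summands. In particular $\E_0$ is not a direct sum of line bundles, so by the remark following the definition of Betti numbers its first Betti sequence is nonempty. The pair $(\a,\b)$ is admissible by \Cref{Betti} (as $\VB(\a,\b)\neq\varnothing$ with $\a$ nonempty), and the same applies to $\E_0$; the admissibility condition (\ref{Admissible}) for $\E_0$ then forces $\rank\E_0\ge n$.

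For the upper bound, set $m:=\max\{j\mid a_l>b_{l+j}\}$. This set is nonempty since admissibility of $(\a,\b)$ gives $a_l>b_{l+n}$, so already $m\ge n$. The idea is to read off line bundle summands directly from the minimal matrix of $\varphi$. Fix minimal bases and consider the entry in row $l+j$ and column $i$ of the $(l+r)\times l$ matrix of $\varphi$; it is a homogeneous form of degree $a_i-b_{l+j}$. For every $j>m$ and every $1\le i\le l$ we have $a_i\le a_l\le b_{l+j}$, using that $\a$ and $\b$ are ascending, so this degree is $\le 0$; minimality rules out a nonzero constant, and no form of negative degree exists, hence the entry vanishes. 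Thus rows $l+m+1,\dots,l+r$ are identically zero, the image of $\varphi$ lands in $\L(b_1,\dots,b_{l+m})$, and we obtain a splitting
\[
\E\;\cong\;\coker\bigl(\L(\a)\to\L(b_1,\dots,b_{l+m})\bigr)\;\oplus\;\L(b_{l+m+1},\dots,b_{l+r}).
\]
This exhibits $r-m$ line bundle summands of $\E$. By the uniqueness in the Krull-Schmidt decomposition, $\rank\L\ge r-m$, whence $\rank\E_0=r-\rank\L\le m$, as desired.

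The routine verifications are the additivity of cohomology under direct sums and the degree bookkeeping for the entries of $\varphi$. The only genuinely delicate point is the upper bound: one must check carefully that minimality, together with the ascending order of $\a$ and $\b$, forces the entire lower block of $r-m$ rows to vanish, not merely isolated entries. Once this vanishing is in hand the splitting—and hence the bound $\rank\E_0\le m$—is immediate, so I expect the main content to lie in this matrix observation rather than in any deeper structural input.
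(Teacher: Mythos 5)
Your proposal is correct and follows essentially the same route as the paper: both arguments use minimality plus the ascending order of $\a$ and $\b$ to force the last $r-m$ rows of the matrix of $\varphi$ to vanish (degree $a_i-b_{l+j}\le 0$ rules out nonzero entries), split off the corresponding line bundle summands, and invoke Krull--Schmidt uniqueness for the upper bound, while the lower bound comes from applying \Cref{Betti} to $\E_0$, which again satisfies (\ref{Cohomology}) and is not a sum of line bundles. The only cosmetic difference is that the paper packages the splitting via the coordinate projection $\pi$, \Cref{Bundle}, and the snake lemma, whereas you read it off directly from the block of zero rows.
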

\begin{proof}
Set $s := \max\{j\mid a_l > b_{l+j}\}$ and define $\b' := b_1,\dots, b_s$.
Let $\pi:\L(\b) \to \L(\b')$ be the coordinate projection. 
If $\varphi \in \Hom(\L(\a),\L(\b'))$ is a minimal map whose cokernel is a bundle $\E$, then we claim that $\varphi' := \pi\circ\varphi$ is a minimal map in $\Hom(\L(\a),\L(\b))$ whose cokernel is a bundle $\E'$.
To see this, observe that since $a_l\le b_{l+i}$ for $s<i\le r$ and $\varphi$ is minimal, the last $(r-s)$ rows of the matrix representing $\varphi$ relative to any bases are zero.
In particular, we have $I_l(\varphi) = I_l(\pi\circ \varphi)$.
By \Cref{Bundle}, the cokernel of $\varphi'$ is a bundle.
It follows from the snake lemma that $\E \cong \E'\oplus \L$, where $\L$ is the kernel of the projection $\pi$.
This shows that $\rank \E_0 \le s$.
Observe that $\E_0$ also satisfies (\ref{Cohomology}) and thus $\rank \E_0 \ge n$ by \Cref{Betti}.
\end{proof}

\bigskip

\subsection{Finiteness} In this subsection, we show that there are only finitely many possible Betti numbers of bundles in $\VB$ with given rank, first Chern class and bounded regularity.

\bigskip

Recall that a coherent sheaf $\F$ on $\P^n$ is said to be $d$-regular if $H^i(\F(d-i)) = 0$ for all $i>0$.
The Castelnuovo-Mumford regularity of $\F$ is the least integer $d$ that $\F$ is $d$-regular.
By the semicontinuity of cohomologies, being $d$-regular is an open condition for a family of coherent sheaves on $\P^n$.
The notion of regularity also exists for graded $R$-modules.
See \cite{DE} for an exposition.

\bigskip

If $\E \in \VB(\a,\b)$, then $\reg \E = \max(b_{l+r}, a_l-1)$.
Since the regularity depends only on the Betti numbers, we define $\reg (\a,\b) := \max(b_{l+r},a_l-1)$ for any admissible pair $(\a,\b)$.

\begin{proposition}\label{Finite}
There are only finitely many possible Betti numbers $(\a,\b)$ of rank $r$ bundles on $\P^n$ satisfying (\ref{Cohomology}) with fixed first Chern class $c_1$ and regularity $\le d$.
\end{proposition}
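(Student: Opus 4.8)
The plan is to confine an admissible pair $(\a,\b)$ to a finite set by producing, from the three fixed data $r$, $c_1$ and $d$, a uniform upper bound on the entries, an upper bound on the length $l$, and a uniform lower bound on the entries. First I dispose of the case $\a$ empty: then $\E = \L(\b)$ is a direct sum of $r$ line bundles with $\sum b_i = -c_1$ and each $b_i \le d$, which clearly leaves only finitely many choices. So I assume $\a$ is nonempty, and \Cref{Betti} supplies the admissibility inequalities $r \ge n$ and $a_i > b_{n+i}$ for $i = 1,\dots, l$.

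The upper bounds are immediate from regularity: since $\reg(\a,\b) = \max(b_{l+r}, a_l-1)\le d$ and both sequences are ascending, every $b_i \le d$ and every $a_i \le d+1$. The substance of the proof is the interaction between the admissibility inequalities and the first Chern class, which from the resolution (\ref{SheafRes}) equals $c_1 = \sum_{i=1}^l a_i - \sum_{i=1}^{l+r} b_i$. The idea is to pair $a_i$ against $b_{n+i}$ for each $i$ (these indices are valid because $r \ge n$), and to set aside the $r$ \emph{unpaired} values $b_1,\dots,b_n, b_{n+l+1},\dots,b_{l+r}$. Rewriting the Chern class in terms of these pairs yields the identity
\[
\sum_{i=1}^l (a_i - b_{n+i}) = c_1 + \sum_{j} b_j,
\]
where the last sum runs over the $r$ unpaired indices $j$.

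With this identity in hand, both remaining bounds fall out. Each summand $a_i - b_{n+i}$ is at least $1$ by admissibility, so the left side is at least $l$; since the $r$ unpaired $b_j$ are each at most $d$, the right side is at most $c_1 + rd$, giving $l \le c_1 + rd$ and bounding the length. For the lower bound the same inequality reads $\sum_j b_j \ge l - c_1 \ge 1 - c_1$, and as there are $r$ unpaired terms each at most $d$, every unpaired $b_j$ is at least $(l-c_1) - (r-1)d$. The crucial point is that $b_1$ is one of the unpaired entries, so $b_1$, and hence every $b_i \ge b_1$, is bounded below; then $a_i > b_{n+i} \ge b_1$ bounds the $a_i$ below as well. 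All entries now lie in a fixed finite range of integers and $l$ is bounded, so only finitely many pairs $(\a,\b)$ occur.

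The main obstacle is precisely this lower bound. A priori nothing forbids the generator degrees $b_i$ from running off to $-\infty$, since regularity controls them only from above; one must therefore exploit how admissibility couples $\a$ and $\b$. The key observation is that the pairing consumes exactly $l$ of the $b_i$, and the surviving ones — which include the smallest, $b_1$ — have their total pinned from below by $c_1$ together with the positivity of the differences $a_i - b_{n+i}$. This interlocking is what keeps the entries within a bounded window.
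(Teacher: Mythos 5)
Your proof is correct and follows essentially the same route as the paper's: the same pairing of $a_i$ with $b_{n+i}$, the same identity $\sum_i (a_i - b_{n+i}) = c_1 + \sum_j b_j$ over the $r$ unpaired indices, and the same two bounds $l \le c_1 + rd$ and $b_1 \ge l - c_1 - (r-1)d$. Your write-up is somewhat more explicit about why $b_1$ is among the unpaired entries and why the lower bound propagates to all $a_i$ and $b_i$, but the argument is identical in substance.
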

\begin{proof}
Since $c_1 = \sum_{i = 1}^l a_i - \sum_{i = 1}^{l+r} b_i$, the statement is evidently true for direct sums of line bundles. 
Thus we may consider the case $l>0$.
Since $a_i$ and $b_i$ are bounded above by $d+1$, we only need to show that $l$ is bounded above and $b_1$ is bounded below.
Consider the following inequalities
\begin{align*}
l &\le \sum_{i = 1}^l (a_i-b_{i+n}) \\
& = c_1+\sum_{i = 1}^n b_i +\sum_{i = l+n+1}^{l+r} b_i\\
& \le c_1 + r \cdot d.
\end{align*}
And similarly,
\begin{align*}
b_1 &= -c_1-\sum_{i = 2}^n b_i - \sum_{i = l+n+1}^{l+r} b_i +\sum_{i =1}^l (a_i-b_i)\\
& \ge -c_1 - (r-1)\cdot d + l. \qedhere
\end{align*}
\end{proof}

This generalizes the observation of Dionisi-Maggesi \cite{DM} for the case $n = r = 2$.

\bigskip

\subsection{Hilbert functions of bundles}\label{BundleSequence}

In this subsection, we classify the Hilbert functions of bundles in $\VB$.
We introduce an efficient way to represent and generate them.

\bigskip

Recall that the \emph{Hilbert function} of a bundle $\E$ on $\P^n$ is the function $H_\E(t):\mathbb{Z} \to \mathbb{Z}$ given by $H_\E(t) = \dim_k H^0(\E(t))$. For any function $H:\mathbb{Z}\to \mathbb{Z}$, we define $\VB(H)$ to be the subset of $\VB$ consisting of isomorphism classes of bundles with Hilbert function $H$.

\begin{definition}
The \emph{numerical difference} of a function $H:\mathbb{Z}\to \mathbb{Z}$ is a function $\partial H:\mathbb{Z} \to \mathbb{Z}$ given by $\partial H(t) := H(t)-H(t-1)$.
We inductively define $\partial^{i+1}H := \partial \partial^i H$. 
\end{definition}

Note that if $H:\mathbb{Z}\to \mathbb{Z}$ is a function such that $H(t) = 0$ for $t\ll 0$, then $H$ can be recovered by its $i$-th difference $\partial^i H$ for any $i\ge 0$.

\begin{theorem}\label{Hilbert}
A function $H:\mathbb{Z}\to \mathbb{Z}$ is the Hilbert function of a rank $r$ bundle $\E\in \VB$ if and only if
\begin{enumerate}
\item $\partial^nH(t) = 0$ for $t\ll 0$ and $\partial^nH(t) = r$ for $t\gg 0$,
\item $\partial^nH(t+1) < \partial^nH(t)$ implies that $\partial^nH(t+1)\ge n$.
\end{enumerate}
\end{theorem}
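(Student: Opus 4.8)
The plan is to reduce both implications to the classification of admissible Betti numbers in \Cref{Betti}, via a single coefficientwise dictionary between the iterated differences of $H$ and the Betti numbers. The starting point is that for $\E\in\VB(\a,\b)$ the resolution (\ref{ModuleRes}), together with additivity of Hilbert series and the fact that each $R(-d)$ contributes $z^{d}/(1-z)^{n+1}$, gives
\[
\sum_{t}H_\E(t)\,z^{t}=\frac{\sum_{i}z^{b_i}-\sum_{i}z^{a_i}}{(1-z)^{n+1}}.
\]
Since $\partial$ corresponds to multiplication by $(1-z)$, clearing denominators and reading off coefficients yields $\partial^{n+1}H_\E(t)=\#\{i:b_i=t\}-\#\{i:a_i=t\}$, and hence, writing $A(t):=\#\{i:a_i\le t\}$ and $B(t):=\#\{i:b_i\le t\}$, the clean identity $\partial^{n}H_\E(t)=B(t)-A(t)$. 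This one formula drives everything below.

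For the forward implication I would read condition (1) off the boundary values of $B-A$: both counts vanish for $t\ll 0$, while $B(t)=l+r$ and $A(t)=l$ for $t\gg 0$, giving $\partial^{n}H_\E=0$ and $=r$ respectively. For condition (2), suppose $\partial^{n}H_\E(c)<\partial^{n}H_\E(c-1)$; then $\partial^{n+1}H_\E(c)<0$ forces at least one $a_i=c$, and taking $i_0$ maximal with $a_{i_0}=c$ we have $A(c)=i_0$. The admissibility condition (\ref{Admissible}) of \Cref{Betti} gives $b_{n+i_0}<a_{i_0}=c$, so $B(c)\ge n+i_0$ and therefore $\partial^{n}H_\E(c)=B(c)-A(c)\ge n$. (When $\a$ is empty, $\partial^{n+1}H_\E\ge 0$, so $\partial^{n}H_\E$ is nondecreasing and (2) is vacuous.)

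For the converse I am given $H$ satisfying (1) and (2), together with the standing normalization $H(t)=0$ for $t\ll 0$ that lets $H$ be recovered from $\partial^{n}H$. I would set $f:=\partial^{n+1}H$, which by (1) is finitely supported with $\sum_t f(t)=r$, and then define $\b$ to contain each $t$ with multiplicity $\max(f(t),0)$ and $\a$ to contain each $t$ with multiplicity $\max(-f(t),0)$, so that the dictionary of the first paragraph holds by construction. If $\a$ is empty then $\L(\b)$ is the required bundle. Otherwise I must verify that $(\a,\b)$ is admissible and then invoke \Cref{Betti}: a general minimal $\varphi\in\Hom(\L(\a),\L(\b))$ then has cokernel $\E\in\VB(\a,\b)$, and by the first paragraph $\partial^{n+1}H_\E=f=\partial^{n+1}H$; thus $H_\E-H$ has vanishing $(n+1)$-st difference and vanishes for $t\ll 0$, hence is identically zero and $H_\E=H$.

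The crux is therefore to extract admissibility from (1) and (2). For $r\ge n$ I would pick $c$ with $f(c)<0$; condition (2) gives $\partial^{n}H(c)\ge n$, hence $\partial^{n}H(c-1)\ge n+1$, whereas $\partial^{n}H\equiv r$ for $t\gg 0$. Were $r<n$, the function $\partial^{n}H$ would have to pass from a value $\ge n$ down to a value $<n$, and at the first such downward step condition (2) would be violated. For the inequalities $a_i>b_{n+i}$, at each $a$-value $c$ one has $B(c-1)=B(c)=\partial^{n}H(c)+A(c)\ge n+A(c)$ by (2), and unwinding this across the multiplicity of $c$ yields $b_{n+i}<a_i$ for every $i$. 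I expect this last translation---matching the sorted sequences $\a,\b$ against the counting functions $A,B$ while tracking multiplicities---to be the only delicate bookkeeping, the rest following formally from the Hilbert-series identity and \Cref{Betti}.
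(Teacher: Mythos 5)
Your proposal is correct and follows essentially the same route as the paper: both directions reduce to the admissibility criterion of \Cref{Betti} via the dictionary $\partial^{n+1}H(t)=\mu(\b,t)-\mu(\a,t)$, with the same bookkeeping (your counting functions $A,B$ are the paper's partial sums $\sum_{i\le t}(\mu(\b,i)-\mu(\a,i))$) and the same sign-decomposition construction of $(\a,\b)$ in the converse. The only differences are cosmetic improvements: you obtain condition (1) directly from the Hilbert series of the resolution instead of Grothendieck--Riemann--Roch, and you make explicit the verification that $r\ge n$, which the paper leaves implicit in the existence of the index $b_{i+n}$.
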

\begin{proof} Let $\mu(\d,t)$ denote the number of times an integer $t$ occurs in the sequence $\d$.

($\Longrightarrow$): Suppose $\E$ is a rank $r$ bundle in $\VB(H)$.
The Grothendieck-Riemann-Roch formula states that
\[
\chi(\E(t)) = \int_{\P^n} \op{ch}(\E(t))\cdot \op{td}(T_{\P^n})\label{RiemannRoch}. 
\]
A routine computation shows that the leading coefficient of the Hilbert polynomial $\chi(\E(t))$ is $r\cdot t^n /n!$. 
Since the Hilbert function $H$ eventually agrees with the Hilbert polynomial, we see that $\partial^nH(t) = 0$ for $t\ll 0$ and $\partial^n H(t) = r$ for $t\gg 0$. 

Let $(\a,\b)$ be the Betti numbers of $\E$. 
If $\a$ is empty, then $\E$ is a direct sum of line bundles and $\partial^nH$ is monotone nondecreasing and thus satisfies both conditions.
We prove the case where $\a$ is non-empty.
Consider the minimal free resolution
\[
0 \to L(\a) \to L(\b) \to H^0_*(\E) \to 0.
\]
A simple calculation shows that $\partial^{n+1} H(R(-a),t)$ is the delta function at $a$. 
It follows from the minimal resolution that $\partial^{n+1} H(t) = \mu(\b,t)-\mu(\a,t)$.
Suppose $\partial^nH(t+1) < \partial^nH(t)$ for some $t$, then $\partial^{n+1}H(t+1) <0$ and thus $\mu(\a,t+1)>0$. 
Let $j$ be the largest index where $a_j = t+1$.
By \Cref{Betti}, we have $a_j>b_{j+n}$ and therefore
\[
\partial^nH(t+1) = \sum_{i\le t+1} \partial^{n+1}H(i) = \sum_{i\le t+1}(\mu(\b,i)-\mu(\a,i)) \ge j+n-j = n.
\]

($\Longleftarrow$): Conversely, suppose $H$ satisfies the conditions of the theorem.
We define the ascending sequences of integers $\aa$ and $\bb$ by the property that for all $t\in \mathbb{Z}$,
\[
\mu(\aa,t) = \max\{0, \partial^nH(t-1)-\partial^nH(t)\},\quad \mu(\bb,t) = \max\{0, \partial^n H(t-1) -\partial^n H(t)\}.
\]
By the first condition on $H$, the sequences $\aa$ and $\bb$ are finite.
Furthermore, if $\aa$ has length $l$ then $\bb$ has length $l+r$.
The second condition on $H$ implies that $a_i \ge b_{i+n}$ for all $1\le i\le l$.
Since $\aa$ and $\bb$ share no common entries by construction, it follows that $a_i > b_{i+n}$ for all $1\le i\le l$.
By \Cref{Betti}, there is a rank $r$ bundle $\E$ on $\P^n$ satisfying (\ref{Cohomology}) with Betti numbers $(\aa,\bb)$.
The Hilbert function of $\E$ is $H$ by the reasoning of the previous direction.
\end{proof}

The above theorem suggests that we use the finitely many intermediate values of $\partial^n H$ to encode the infinitely many values of the Hilbert function $H$.

\begin{definition}
A finite sequence of integers $\B = B_1,\dots, B_m$ for some $m\ge 1$ is called a \emph{bundle sequence of rank $r$} if it satisfies the following
\begin{enumerate}
\item $B_i >0$ for $1\le i\le m$,
\item $B_m = r$ and $B_{m-1} \ne r$, 
\item $B_{i+1}<B_i$ implies $B_{i+1} \ge n$.
\end{enumerate}

If $\E$ is a rank $r$ bundle in $\VB(H)$ for some Hilbert function $H$, then we set 
\[
s_0 := \inf\{t \mid \partial^n H(t)\ne 0\},\quad s_1 := \sup \{t \mid \partial^n H(t) \ne r\}.
\]
The sequence $\partial^nH(s_0), \partial^nH(s_0+1),\dots, \partial^nH(s_1+1)$ is a bundle sequence of rank $r$ by \Cref{Hilbert}, which we call the \emph{bundle sequence of $H$ and of $\E$}. 
\end{definition}

By \Cref{Hilbert}, there is a one-to-one correspondence between the set of Hilbert functions of rank $r$ bundles in $\VB$ \textbf{up to shift} and the set of bundles sequences of rank $r$.
The ambiguity of shift disappears if we deal with normalized bundles.

\begin{definition}
We say a rank $r$ bundle on $\P^n$ is \emph{normalized} if $-r < c_1(\E) \le 0$.
Since $c_1(\E(t)) = c_1(\E)+r\cdot t$, it follows that every bundle can be normalized after twisting by the line bundle $\O(-\lceil c_1(\E)/r\rceil)$.
\end{definition}

We define the degree of a bundle sequence $\B = B_1,\dots, B_m$, denoted by $\deg \B$, to be the sum $B_1+\dots+B_m$.

\begin{proposition}\label{Regularity}
If a normalized rank $r$ bundle $\E\in \VB$ has bundle sequence $\B$, then $\reg \E \ge \lceil\deg \B / r \rceil-2$.
\end{proposition}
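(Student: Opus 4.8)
The plan is to express $\deg\B$ in closed form in terms of the Betti numbers $(\a,\b)$ and the first Chern class $c_1$, and then to feed in the normalization inequality $c_1\le 0$. The starting point is the identity $\partial^{n+1}H(t)=\mu(\b,t)-\mu(\a,t)$ from the proof of \Cref{Hilbert}. Writing $\beta(t):=|\{k:b_k\le t\}|$ and $\alpha(t):=|\{k:a_k\le t\}|$ and summing this identity over all $i\le t$ telescopes to $\partial^nH(t)=\beta(t)-\alpha(t)$. From this together with the admissibility inequality $a_1>b_{n+1}\ge b_1$ of \Cref{Betti} I would read off $s_0=b_1$, and observe that $\partial^nH(t)=r$ once $t\ge\max(a_l,b_{l+r})$; in particular $s_1\le\max(a_l-1,b_{l+r})=\reg\E$.

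Set $\rho:=\reg\E$. The next step is to compare $\deg\B=\sum_{t=s_0}^{s_1+1}\partial^nH(t)$ with the longer sum $\Sigma:=\sum_{t=s_0}^{\rho+1}\partial^nH(t)$. Since $\rho\ge s_1$, every extra term (those with $t\ge s_1+2$) equals $r\ge0$, so $\deg\B\le\Sigma$. To evaluate $\Sigma$ I would note that $\rho+1\ge a_l$ and $\rho+1>b_{l+r}$, so all of the $a_k$ and all of the $b_k$ lie in the window $[s_0,\rho+1]$; interchanging the order of summation then gives $\sum_{t=s_0}^{\rho+1}\beta(t)=(l+r)(\rho+2)-\sum_k b_k$ and $\sum_{t=s_0}^{\rho+1}\alpha(t)=l(\rho+2)-\sum_k a_k$. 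Subtracting, the length terms cancel and $\Sigma=r(\rho+2)+c_1$. Using $c_1\le0$ from normalization yields $\deg\B\le\Sigma\le r(\rho+2)$, hence $\deg\B/r\le\rho+2$; as $\rho+2$ is an integer this forces $\rho+2\ge\lceil\deg\B/r\rceil$, which is exactly the asserted bound.

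The genuinely routine part is the double-counting evaluation of $\Sigma$. The one point that needs care is the bookkeeping at the right end: verifying $s_1\le\reg\E$ and arranging to sum up to $\rho+1$ rather than merely to $s_1+1$. This truncation is deliberate, since it lets me avoid pinning down the exact value of $s_1$, which can be subtle precisely when $a_l$ and $b_{l+r}$ coincide and the top Betti numbers partially cancel; by padding the sum with the nonnegative terms equal to $r$ I sidestep that case analysis entirely while keeping the inequality in the correct direction.
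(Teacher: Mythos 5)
Your proof is correct and follows essentially the same route as the paper: both reduce the claim to the identity $\deg\B = c_1(\E) + (s_1+2)r$ (in your version, the padded sum $\Sigma = c_1(\E)+(\reg\E+2)r$ obtained by double counting rather than by the paper's summation by parts on $-\sum_t t\,\partial^{n+1}H(t)$), and then invoke $c_1(\E)\le 0$ from normalization together with $s_1+1\le\max(a_l,b_{l+r})$. Your device of padding the sum up to $\reg\E+1$ with terms equal to $r$ is a clean way to handle the endpoint bookkeeping, but it does not change the substance of the argument.
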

\begin{proof}
Suppose $\E$ has Betti numbers $(\a,\b)$ and Hilbert function $H$.
We set $c := \max(a_l,b_{l+r})$ and $s_1 := \sup\{t\mid \partial^n H(t) \ne r\}$.
It follows from the short exact sequence
\[
0 \to L(\a) \to L(\b) \to H^0_*(\E) \to 0
\]
that $s_1<c$.
We have
\begin{align*}
c_1(\E) & = \sum_{i = 1}^l a_i-\sum_{i = 1}^{l+r} b_i = -\sum_t t \cdot \partial^{n+1}H(t) = -\sum_t t \cdot (\partial^n H(t)-\partial^nH(t-1))\\
& = \sum_{t\le s_1+1} t \cdot \partial^n H(t-1) - \sum_{t\le s_1+1} t \cdot \partial^n H(t)\\
& = \sum_{t \le s_1} \partial^n H(t) - (s_1+1)\cdot r = \deg \B - (s_1+2)\cdot r \ge \deg \B -(c+1)\cdot r.
\end{align*}
Since $\E$ is normalized, we must have $c \ge \lceil \deg \B / r\rceil-1$.
Finally, regularity $\E$ is $c$ or $c-1$ depending on whether $b_{l+r} \ge a_l-1$ or not.
\end{proof}

\begin{proposition}\label{Inductive}
If $\B = B_1,\dots, B_m$ is a bundle sequence of rank $r$ and degree $d$, then $\B' = B_2,\dots, B_m$ is a bundle sequence of rank $r$ and degree $d-B_1$.
\end{proposition}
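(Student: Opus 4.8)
The plan is to verify directly that $\B'$ satisfies the three defining conditions of a bundle sequence of rank $r$ and to compute its degree; each requirement for $\B'$ will be inherited from the corresponding one for $\B$. First I would observe that the statement is only meaningful when $m \ge 2$, so that $\B' = B_2,\dots,B_m$ has length $m-1 \ge 1$ and is a genuine (nonempty) sequence; the case $m = 1$ forces $\B = (r)$ and $\B'$ empty, which is excluded.

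Conditions (1) and (3) require no work beyond restriction. The positivity $B_i > 0$ for $2 \le i \le m$ is part of condition (1) for $\B$. For condition (3), the implications $B_{i+1} < B_i \Rightarrow B_{i+1} \ge n$ needed for $\B'$ range over the indices $2 \le i \le m-1$, a subset of the indices $1 \le i \le m-1$ already covered by condition (3) for $\B$, so they hold automatically. The degree claim is equally immediate: $\deg \B' = \sum_{i=2}^m B_i = \deg \B - B_1 = d - B_1$.

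The only point demanding attention is condition (2), which for $\B'$ asks that its final term equal $r$ and, when $\B'$ has length at least two, that its penultimate term differ from $r$. The final term of $\B'$ is $B_m = r$ by condition (2) for $\B$, which also confirms that $\B'$ has rank $r$. If $m \ge 3$ the penultimate term of $\B'$ is $B_{m-1}$, and the requirement $B_{m-1} \ne r$ is exactly condition (2) for $\B$; if $m = 2$ then $\B' = (r)$ has length one and the penultimate condition is vacuous. I do not expect any genuine obstacle here, since every step is a direct consequence of the definition; the only subtlety is tracking this edge case $m = 2$, where $\B'$ collapses to the single-term sequence $(r)$.
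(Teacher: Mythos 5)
Your proof is correct and matches the paper's intent exactly: the paper states \Cref{Inductive} without proof, treating it as an immediate consequence of the definition, and your direct verification of conditions (1)--(3) for $\B'$ (including the careful handling of the $m=2$ edge case, where $\B'$ collapses to the single-term sequence $(r)$ and the penultimate-term requirement is vacuous) is precisely the routine check being left to the reader. Nothing is missing.
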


It follows from \Cref{Regularity} and \Cref{Inductive} that we can inductively generate, in the form of bundle sequences, all Hilbert functions of normalized bundles satisfying (\ref{Cohomology}) up to any bounded regularity.
The generation is reduced to a partition problem with constraints.

\begin{example}\label{Ex1}
The following are all bundles sequences of rank $4$ and degree $9$ on $\P^3$
\[
\{(1^5,4),(1^3,2,4),(1^2,3,4),(1,2^2,4),(2,3,4),(5,4) \}.
\]
Here we use $t^j$ to denote the sequence of $j$ copies of $t$.
\end{example}

\bigskip

\subsection{Semistability} \label{Semistability}

In this subsection, we address the following question. Do the Betti numbers determine the semistability of a bundle in $\VB$? If so, what is the criterion?

\bigskip

Here we use $\mu$-semistability, where $\mu(\F) := c_1(\F)/\op{rank}(\F)$ for any torsion-free coherent sheaf $\F$ on $\P^n$.
The results are similar for Hilbert polynomial semistability as in \cite{Maruyama}.

\bigskip

For $r<n$, all rank $r$ bundles $\E$ satisfying (\ref{Cohomology}) are direct sums of line bundles by \Cref{Betti}, which are not semistable except for $\O(d)^r$.
The main result in \cite{BS} states that if $\E$ satisfies (\ref{Cohomology}) and has rank $r = n$, then $\E$ is semistable iff its Betti numbers $(\a,\b)$ satisfy $b_1\ge \mu(\E) = (\sum_{i = 1}^l a_i-\sum_{i = 1}^{l+n} b_i)/n$.
The latter condition is obviously necessary.

\bigskip

The following example demonstrates that for $r>n$, the semistability of a bundle in $\VB$ is not determined by its Betti numbers in general.

\begin{example}
For any $r>n$, consider $(\a,\b)$ where
\[
a_1 = 2,\quad b_i = \left\{\begin{array}{l l} 0 & 1\le i < r\\
1 & r\le i \le r+1.\end{array}\right.
\]
Let $\varphi$ and $\psi$ be two maps in $\Hom(\L(\a),\L(\b))$ defined by the matrices
\[
(0,\dots, x_0^2,\dots,x_{n-1}^2, x_n, 0)^T,\quad (0,\dots, x_0^2,\dots, x_{n-2}^2, x_{n-1}, x_n)^T
\]
respectively.
Then $\E_1 := \coker \varphi$ and $\E_2 := \coker \psi$ are rank $r$ bundles satisfying (\ref{Cohomology}) with Betti numbers $(\a,\b)$ by \Cref{Bundle}.
Furthermore, it is easy to see that $\E_1 \cong \E_1'\oplus \O(-1) \oplus \O^{r-n-1}$ and $\E_2 \cong \E_2'\oplus \O^{r-n}$ for some rank $n$ bundles $\E_1'$ and $\E_2'$ respectively.
Since $\mu(\E_1) = \mu(\E_2) = 0$, it is clear that $\E_1$ is not semistable.
On the other hand, the bundle $\E_2'$ is semistable by the criterion for the case $r = n$ stated above.
Since both $\E_2'$ and $\O^{r-n}$ are semistable bundles with $\mu = 0$, it follows that so is $\E_2$. 
\end{example}

\bigskip

The main reason to discuss semistability is that we might hope for a coarse moduli structure on the set $\VB(\a,\b)$. 
However, the above example illustrates the difficulty. 
In \Cref{Moduli} we will define a topology on $\VB(\a,\b)$, where the semistable bundles form an open subspace $\VB(\a,\b)^{ss}$. 
The space $\VB(\a,\b)^{ss}$ supports the structure of a subscheme of $\mathcal{M}(\chi)$, the coarse moduli space of semistable torsion-free sheaves with Hilbert polynomial $\chi$, whose existence is established by Maruyama \cite{Maruyama}.


\bigskip

\section{The Betti number stratification}\label{Stratification}

The set $\VB$ is the disjoint union of $\VB(H)$ for all possible Hilbert functions $H$ which are classified by \Cref{Hilbert}. 
In this section we define a natural topology on $\VB(H)$ and study how $\VB(H)$ is stratified by bundles with different Betti numbers.
In the following, we fix a Hilbert function $H$ satisfying the conditions of \Cref{Hilbert}.

\bigskip

\subsection{The graded lattice of Betti numbers}\label{Lattice}

In this subsection we show that all possible Betti numbers of bundles in $\VB(H)$ form a graded lattice, such that those with bounded regularity form a finite sublattice.

\begin{definition}
We define $\Betti(H)$ to be the set of Betti numbers $(\a,\b)$ of bundles in $\VB(H)$.
There is a grading $\Betti(H) = \bigsqcup_q \Betti^q(H)$, where
\[
\Betti^q(H) := \{(\a,\b)\in \Betti(H)\mid \text{$\a$ and $\b$ have exactly $q$ entries in common}\}.
\]
\end{definition}

We remark that $\Betti(H)$ is infinite in general without restrictions on regularity. 
This is due to the fact that the Hilbert function $H$ only bounds regularity from below (see \Cref{Regularity}) but not above, as the following example demonstrates.
 
\begin{example}
Let $(\a,\b)\in \Betti(H)$. 
For some arbitrarily large integer $c$, regarded as a singleton sequence, the pair $(\a,\b)+c$ is admissible by \Cref{Betti}.
Note that any bundle with these Betti numbers has a line bundle summand by \Cref{FreeRank}.
\end{example}

\begin{proposition}
There is a unique element in $\Betti^0(H)$, which we denote by $(\aa,\bb)$.
\end{proposition}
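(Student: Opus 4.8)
The plan is to reduce everything to the multiplicity identity already extracted in the proof of \Cref{Hilbert}. Writing $\mu(\d,t)$ for the number of times $t$ occurs in a sequence $\d$, recall that for \emph{any} $(\a,\b)\in\Betti(H)$ the minimal free resolution $0\to L(\a)\to L(\b)\to H^0_*(\E)\to 0$, together with the computation that $\partial^{n+1}$ applied to the Hilbert function of $R(-a)$ is the delta function supported at $a$, yields
\[
\partial^{n+1}H(t) = \mu(\b,t)-\mu(\a,t)\qquad\text{for all }t\in\mathbb{Z}.
\]
This holds for every pair in $\Betti(H)$ irrespective of how many entries $\a$ and $\b$ share, and it is the only input the argument needs.

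For existence I would point to the pair $(\aa,\bb)$ constructed in the $(\Longleftarrow)$ direction of \Cref{Hilbert}, characterized by $\mu(\aa,t)=\max\{0,-\partial^{n+1}H(t)\}$ and $\mu(\bb,t)=\max\{0,\partial^{n+1}H(t)\}$. By construction these two sequences share no common entries, so $(\aa,\bb)\in\Betti^0(H)$, and \Cref{Hilbert} already guarantees that this pair is admissible and is realized by a genuine bundle in $\VB(H)$.

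For uniqueness I would take an arbitrary $(\a,\b)\in\Betti^0(H)$ and note that having exactly $0$ entries in common means $\min(\mu(\a,t),\mu(\b,t))=0$ for every $t$, i.e. at most one of $\mu(\a,t),\mu(\b,t)$ is nonzero. Feeding this into the displayed identity forces the positive/negative-part decomposition: when $\partial^{n+1}H(t)>0$ the difference can only be supplied by $\mu(\b,t)$, and when $\partial^{n+1}H(t)<0$ only by $\mu(\a,t)$. Hence $\mu(\a,t)=\max\{0,-\partial^{n+1}H(t)\}$ and $\mu(\b,t)=\max\{0,\partial^{n+1}H(t)\}$ for all $t$. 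Since an ascending sequence is determined by its multiplicity function, both sequences coincide with those of $(\aa,\bb)$, and so $(\a,\b)=(\aa,\bb)$.

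I do not expect a genuine obstacle here, as all the substantive content—the multiplicity identity and the admissibility of $(\aa,\bb)$—is already established in \Cref{Hilbert}. The only point requiring care is the bookkeeping observation that the grading condition ``exactly $0$ entries in common'' is precisely $\min(\mu(\a,t),\mu(\b,t))=0$ for all $t$, which is exactly what turns the difference relation into a system with a unique solution.
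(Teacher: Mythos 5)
Your proof is correct and follows essentially the same route as the paper: existence via the pair constructed in the proof of Theorem~\ref{Hilbert}, and uniqueness by combining the identity $\partial^{n+1}H(t)=\mu(\b,t)-\mu(\a,t)$ with the observation that ``no common entries'' forces one of $\mu(\a,t),\mu(\b,t)$ to vanish for each $t$. You merely spell out in more detail the step the paper leaves implicit (that this pins down both multiplicity functions as the positive and negative parts of $\partial^{n+1}H$), and in doing so you also silently correct a sign/typo in the paper's displayed formulas.
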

\begin{proof}
The construction of an element in $\Betti^0(H)$ is given in the proof of \Cref{Hilbert}.
Recall from the proof of \Cref{Hilbert} that $\partial^{n+1}H(t)= \mu(\aa,t)-\mu(\bb,t)$. 
The uniqueness of $(\aa,\bb)$ follows from the fact that either $\mu(\aa,t) = 0$ or $\mu(\bb,t) = 0$ by assumption.
\end{proof}

\bigskip

We now define a partial order on all pairs of increasing sequences of integers.

\begin{definition}
Let $\a,\b,\c$ be three finite sequences of integers in ascending order. 
The sum $\a+\c$ is defined be the sequence obtained by appending $\c$ to $\a$ and sorting in ascending order.
It is clear that this operation is associative.

We define $(\a,\b)+\c$ to be the pair $(\a+\c, \b+\c)$.
If $(\a',\b') = (\a,\b)+\c$ for some $\c$, then we say $(\a,\b)$ is a \emph{generalization} of $(\a',\b')$ and write $(\a,\b)\preceq (\a',\b')$. 
\end{definition}

\bigskip

A direct consequence of \Cref{Betti} is that admissibility is stable under generalization.

\begin{lemma}\label{Generalization}
If $(\a,\b)\preceq (\a',\b')$ and $(\a',\b')$ is admissible, then so is $(\a,\b)$. 
\end{lemma}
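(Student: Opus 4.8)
The plan is to verify directly that condition (\ref{Admissible}) is inherited, since admissibility is by definition exactly this numerical condition, already characterized in \Cref{Betti}. If $\a$ is empty there is nothing to prove, so assume $\a = (a_1,\dots,a_l)$ with $l\ge 1$ and write $(\a',\b') = (\a,\b)+\c$ with $\c = (c_1,\dots,c_q)$. Appending $\c$ to both sequences preserves the rank, since $\a'$ has length $l+q$ while $\b'$ has length $(l+r)+q = (l+q)+r$; moreover $\a'$ is nonempty, so the admissibility of $(\a',\b')$ already forces $r\ge n$. It therefore remains only to deduce the inequalities $a_i > b_{n+i}$ for $1\le i\le l$ from the inequalities $a'_i > b'_{n+i}$ for $1\le i\le l+q$.

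To avoid the index bookkeeping caused by re-sorting, I would pass to the counting functions $\nu_{\underline{s}}(t) := \#\{i : s_i \le t\}$, which are additive under the operation $+$: one has $\nu_{\a'} = \nu_{\a}+\nu_{\c}$ and $\nu_{\b'} = \nu_{\b}+\nu_{\c}$. The key observation is that for ascending sequences the admissibility inequalities are equivalent to the threshold form
\[
\nu_{\b}(v-1) \ge \nu_{\a}(v)+n \quad \text{for every integer } v \text{ with } \nu_{\a}(v)\ge 1,
\]
and likewise for the primed pair. The reason is that $a_i > b_{n+i}$ says precisely that at least $n+i$ of the entries of $\b$ are $\le a_i-1$, i.e. $\nu_{\b}(a_i-1)\ge n+i$; the binding constraint at a value $v=a_i$ is governed by the largest index sharing that value, for which $i=\nu_{\a}(a_i)$, and monotonicity of $\nu_{\b}$ then upgrades this to the displayed form for all $v$. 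Granting this reformulation, fix $v$ with $\nu_{\a}(v)\ge 1$. Then $\nu_{\a'}(v)\ge \nu_{\a}(v)\ge 1$, so the primed inequality applies and gives $\nu_{\b}(v-1)+\nu_{\c}(v-1) \ge \nu_{\a}(v)+\nu_{\c}(v)+n$. Since $\nu_{\c}$ is nondecreasing we have $\nu_{\c}(v-1)\le \nu_{\c}(v)$, and rearranging yields $\nu_{\b}(v-1) \ge \nu_{\a}(v)+n+(\nu_{\c}(v)-\nu_{\c}(v-1)) \ge \nu_{\a}(v)+n$, which is exactly the desired inequality.

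The only real content is the equivalence between the inequalities $a_i > b_{n+i}$ and the threshold form above, and this is where care is needed: repeated entries in $\a$ mean that several indices $i$ can share the same value $a_i$, and one must check that imposing the constraint at the largest such index suffices. I expect this reformulation to be the main obstacle, though a routine one. Once it is in place, the transfer of admissibility from $(\a',\b')$ to $(\a,\b)$ is immediate from the additivity and monotonicity of the counting functions, as the computation above shows.
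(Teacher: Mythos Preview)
Your argument is correct. The reformulation via counting functions is sound: the forward direction follows by taking $j=\nu_{\a}(v)$ and noting $a_j\le v$ together with monotonicity of $\nu_{\b}$, while the backward direction follows since $\nu_{\a}(a_i)\ge i$. The transfer step then drops out cleanly from additivity and the trivial inequality $\nu_{\c}(v-1)\le\nu_{\c}(v)$.

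The paper takes a different route: it reduces by induction to the case where $\c$ is a singleton $c$, locates the largest indices $p,q$ where $a'_p=b'_q=c$, observes that admissibility of $(\a',\b')$ forces $q>n+p$, and then checks the inequalities $a_i>b_{n+i}$ by a case split on whether $i<p$ or $i\ge p$ (and within the latter, whether $i+n\le q$ or $i+n>q$), tracking how removing one entry from each sequence shifts the indices. Your counting-function reformulation trades this index bookkeeping for a single preliminary equivalence, after which the whole lemma becomes a two-line computation handling arbitrary $\c$ at once with no induction. The paper's approach is more hands-on and requires no auxiliary lemma, but the case analysis is delicate and somewhat opaque; your approach isolates the combinatorial content once and for all, and the same threshold reformulation would likely streamline the proofs of \Cref{Diamond} and \Cref{Ladder} as well.
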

\begin{proof}
By induction, it suffices to prove the case where $\a'$ and $\b'$ have a common entry $c$ at index $p$ and $q$ respectively, and that $(\a,\b)$ is obtained from $(\a',\b')$ by removing $a'_p$ and $b'_q$.
We may assume that $p$ and $q$ are the largest indices where $a'_p = c$ and $b'_q = c$ respectively. 
For $i < p$, we have $a_i = a'_i$.
But $i+n <q$ and $b_{i+n} = b'_{i+n}$ for $i<p$ since $q > n+p$ by \Cref{Betti}.
Therefore $a_i > b_{i+n}$ for $i<p$.
In this case, $b_{l+n} = b_{l+n}'$ and $a_l > b_{l+n}$. 
For $i>p$, we have $a_{i-1} = a'_i > c$.
In this case, either $i+n \le q$, in which case $b_{i+n-1} \le c < a_{i-1}$;
or $i+n>q$, and $b_{i+n-1} = b'_{i+n}$ thus $b_{i+n-1} < a_{i-1}$. 
We conclude that $(\a,\b)$ is also admissible.
\end{proof}

\begin{corollary}\label{Minimum}
Every $(\a,\b)$ in $\Betti(H)$ is of the form $(\aa,\bb)+\c$ for some $\c$.
\end{corollary}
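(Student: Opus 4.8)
The plan is to show that $(\aa,\bb)$ is the minimum of $\Betti(H)$ under the partial order $\preceq$, that is, that $(\aa,\bb)\preceq(\a,\b)$ for every $(\a,\b)\in\Betti(H)$; by the definition of $\preceq$ this is literally the assertion that $(\a,\b)=(\aa,\bb)+\c$ for some $\c$. The entire argument rests on the identity $\partial^{n+1}H(t)=\mu(\b,t)-\mu(\a,t)$ established in the proof of \Cref{Hilbert}, which holds for the Betti numbers of \emph{every} bundle in $\VB(H)$ precisely because they all realize the same Hilbert function $H$.

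First I would record the two ingredients side by side. For an arbitrary $(\a,\b)\in\Betti(H)$, the minimal resolution gives $\mu(\b,t)-\mu(\a,t)=\partial^{n+1}H(t)$ for all $t$. On the other hand, the distinguished pair $(\aa,\bb)\in\Betti^0(H)$ is characterized by having disjoint support, so from the construction in the proof of \Cref{Hilbert} one has $\mu(\bb,t)=\max\{0,\partial^{n+1}H(t)\}$ and $\mu(\aa,t)=\max\{0,-\partial^{n+1}H(t)\}$, whence $\mu(\bb,t)-\mu(\aa,t)=\partial^{n+1}H(t)$ as well.

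Then I would define the candidate $\c$ by its multiplicity function $\mu(\c,t):=\mu(\a,t)-\mu(\aa,t)$. Subtracting the two identities above shows $\mu(\a,t)-\mu(\aa,t)=\mu(\b,t)-\mu(\bb,t)$, so this single $\c$ simultaneously records the excess of $\a$ over $\aa$ and of $\b$ over $\bb$; once I verify $\mu(\c,t)\ge 0$ for all $t$, this gives $\a=\aa+\c$ and $\b=\bb+\c$, i.e. $(\a,\b)=(\aa,\bb)+\c$. Nonnegativity is a one-line case split on the sign of $\partial^{n+1}H(t)$: where $\partial^{n+1}H(t)\ge 0$ one has $\mu(\aa,t)=0$, so $\mu(\c,t)=\mu(\a,t)\ge 0$; where $\partial^{n+1}H(t)<0$ one has $\mu(\bb,t)=0$, so $\mu(\c,t)=\mu(\b,t)\ge 0$. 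Since $\a$ and $\aa$ are finite, $\mu(\c,t)=0$ for all but finitely many $t$, so $\c$ is a genuine finite sequence in ascending order.

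There is no serious obstacle here; the content is conceptual rather than computational. The only point to get right is the observation that the Hilbert function $H$ pins down the function $\partial^{n+1}H$, hence the pointwise difference $\mu(\b,\cdot)-\mu(\a,\cdot)$, uniformly for all pairs in $\Betti(H)$, and that $(\aa,\bb)$ is by construction the unique realization of this difference in which the positive and negative parts do not overlap. Everything else is bookkeeping with multiplicity functions.
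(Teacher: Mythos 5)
Your proof is correct, but it takes a different route from the one the paper's structure suggests. The corollary is placed immediately after \Cref{Generalization} and the uniqueness statement for $\Betti^0(H)$, and the intended argument is evidently: cancel all common entries of $\a$ and $\b$ to obtain a pair with no common entries, invoke \Cref{Generalization} to see that this pair is still admissible (hence lies in $\Betti^0(H)$, since cancellation does not change the Hilbert function), and conclude by uniqueness that it equals $(\aa,\bb)$. You instead work entirely with multiplicity functions: the identity $\partial^{n+1}H(t)=\mu(\b,t)-\mu(\a,t)$ holds for every pair in $\Betti(H)$, the pair $(\aa,\bb)$ realizes this difference with disjoint positive and negative parts, and the sign case-split gives $\mu(\a,t)-\mu(\aa,t)=\mu(\b,t)-\mu(\bb,t)\ge 0$, which is exactly the desired $\c$. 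This is self-contained and bypasses \Cref{Generalization} altogether, which is a genuine simplification for this particular statement; the trade-off is that the paper's route reuses machinery (stability of admissibility under cancellation) that is needed anyway for the lattice structure later. One small point in your favor: the paper's displayed formulas for $\mu(\aa,t)$ and $\mu(\bb,t)$ in the proof of \Cref{Hilbert} are typographically identical (clearly a typo), and you have silently used the correct asymmetric versions $\mu(\aa,t)=\max\{0,-\partial^{n+1}H(t)\}$ and $\mu(\bb,t)=\max\{0,\partial^{n+1}H(t)\}$, which is what the construction requires.
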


\bigskip

The main theorem of this subsection is the following.

\begin{theorem}\label{GradedLattice}
The set $\Betti(H)$ has the structure of a graded lattice given by the partial order $\preceq$ and the grading $\Betti(H) = \bigsqcup_q \Betti^q(H)$.
\end{theorem}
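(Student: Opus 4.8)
The plan is to translate everything into multisets of integers ordered by containment. By \Cref{Minimum}, every element of $\Betti(H)$ has the form $(\aa,\bb)+\c$ for a unique finite ascending sequence $\c$, the uniqueness coming from the fact that $\aa$ and $\bb$ share no entry. Viewing $\c$ as a finite multiset via its multiplicity function $\mu(\c,-)$, the map $\c\mapsto(\aa,\bb)+\c$ sends containment to $\preceq$: one has $(\aa,\bb)+\c\preceq(\aa,\bb)+\c'$ iff $\c'=\c+\d$ for some $\d$, i.e. iff $\c\subseteq\c'$. By \Cref{Betti} the sequences $\c$ with $(\aa,\bb)+\c$ admissible are exactly those landing in $\Betti(H)$, because appending common entries leaves $\partial^{n+1}H$, and hence $H$, unchanged. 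Finally, using $\min(\mu(\aa,t),\mu(\bb,t))=0$, the number of common entries of $(\aa+\c,\bb+\c)$ is seen to equal $|\c|$, so the grading $\Betti^q(H)$ matches the multisets of cardinality $q$. It therefore suffices to prove that the admissible multisets form a downward-closed sublattice of the lattice $L$ of all finite multisets of integers under containment, graded by cardinality.

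Downward closure is immediate from \Cref{Generalization}, since $\c'\subseteq\c$ gives $(\aa,\bb)+\c'\preceq(\aa,\bb)+\c$. Hence the meet of two admissible multisets is their intersection $\min(\mu(\c,t),\mu(\c',t))$, which lies below each and is therefore admissible. The real content is that the join, the union $\mu(\c\vee\c',t)=\max(\mu(\c,t),\mu(\c',t))$, is again admissible; this is the step I expect to be the main obstacle.

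To reach it I would first record a local reformulation of admissibility. Writing $A(t)=\#\{i\mid a_i\le t\}$ and $B(t)=\#\{j\mid b_j\le t\}$, the condition of \Cref{Betti} is equivalent to $A(t)\le\max(0,B(t-1)-n)$ for all $t$. Applying this to $\a=\aa+\c$ and $\b=\bb+\c$, whose counting functions are $A_0+C$ and $B_0+C$ for the counting functions $A_0,B_0,C$ of $\aa,\bb,\c$, the terms $C(t-1)$ cancel and the criterion becomes: for every $t$ with $A_0(t)+C(t)\ge1$ one has $\mu(\c,t)\le D(t)$, where $D(t):=B_0(t-1)-A_0(t)-n$ depends only on $H$.

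Given this, closure under union is a short case analysis at each $t$. Fix $t$, put $\c''=\c\vee\c'$ with $C''(t)=\sum_{s\le t}\max(\mu(\c,s),\mu(\c',s))$, and suppose $A_0(t)+C''(t)\ge1$. This forces $A_0(t)+C(t)\ge1$ or $A_0(t)+C'(t)\ge1$, so the local condition for $\c$ or $\c'$ applies and, as all multiplicities are nonnegative, gives $D(t)\ge0$. Now for $\c$: either $A_0(t)+C(t)\ge1$ and $\mu(\c,t)\le D(t)$, or $A_0(t)+C(t)=0$, whence $\mu(\c,t)=0\le D(t)$; likewise for $\c'$. Thus $\mu(\c'',t)=\max(\mu(\c,t),\mu(\c',t))\le D(t)$, so $\c''$ is admissible. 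This makes the admissible multisets a sublattice of $L$; being downward closed, each of its covers is a cover in $L$ and so increases cardinality by exactly $1$. Hence $q$ is a rank function and $(\Betti(H),\preceq)$ is a graded lattice with grading $\bigsqcup_q\Betti^q(H)$ and bottom element $(\aa,\bb)$.
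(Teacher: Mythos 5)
Your argument is correct, and its skeleton matches the paper's: both use \Cref{Minimum} and \Cref{Generalization} to reduce to the multisets $\c$ sitting over the minimum $(\aa,\bb)$, take the meet to be the entrywise minimum of multiplicities (admissible by downward closure), and take the join to be the entrywise maximum. Where you genuinely diverge is the one step with real content, the admissibility of the join. The paper proves it by induction, adjoining one entry at a time via \Cref{Diamond} and \Cref{Ladder}, with the attendant bookkeeping of the indices $p(\d,t)$. You instead recast condition (\ref{Admissible}) in terms of the counting functions $A(t)=\#\{i\mid a_i\le t\}$ and $B(t)=\#\{j\mid b_j\le t\}$ as $A(t)\le\max(0,B(t-1)-n)$ for all $t$; substituting $A_0+C$ and $B_0+C$ makes the criterion local, namely $\mu(\c,t)\le D(t)$ with $D(t)=B_0(t-1)-A_0(t)-n$ depending only on $H$, and closure under union becomes a pointwise $\max$ computation. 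I checked the equivalence of the two formulations of (\ref{Admissible}), including the degenerate case $A(t)=0$, and it holds; your case analysis correctly extracts $D(t)\ge 0$ before using it for whichever of $\c,\c'$ is locally vacuous. Your route buys a shorter, computation-free proof that the admissible $\c$ form a downward-closed sublattice of all finite multisets, and your remark that covers in a downward-closed subposet are covers in the ambient lattice justifies the gradedness more explicitly than the paper's one-line assertion about cover relations; the paper's route stays in the index language of \Cref{Betti} at the cost of the two auxiliary lemmas.
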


For the clarity of the proof, we establish the existence of meet in several steps.

\begin{lemma}\label{Diamond}
If $c$ and $d$ are two distinct integers (considered as singleton sequences) such that both $(\a,\b)+c$ and $(\a,\b)+d$ are admissible, then so is $(\a,\b)+c+d$.
\end{lemma}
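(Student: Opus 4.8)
The plan is to recast admissibility as a family of inequalities between the two counting functions of the sequences, and then to show that appending a single integer perturbs these inequalities in a completely controlled way, so that appending two distinct integers is harmless as soon as each is harmless on its own.

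For a pair $(\a,\b)$ with $\a$ of length $l$ and $\b$ of length $l+r$ I would introduce the step functions $A(t):=\sum_{s\le t}\mu(\a,s)=\#\{i:a_i\le t\}$ and $B(t):=\sum_{s\le t}\mu(\b,s)=\#\{j:b_j\le t\}$. Reading the inequality $a_i>b_{n+i}$ at the largest index attaining each value, the admissibility of $(\a,\b)$ (for nonempty $\a$) is equivalent to $r\ge n$ together with
\[
A(t)+n\le B(t-1)\quad\text{at every value }t\text{ occurring in }\a.
\]
Establishing this equivalence is a routine unwinding of $a_i\ge b_{n+i}+1$, the only care being to test each condition at the top of a run of equal entries.

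Next I would record how appending a single integer $c$ changes the data: inserting one copy of $c$ into each sequence replaces $A(t)$ by $A(t)+\mathbf{1}[t\ge c]$ and $B(t)$ by $B(t)+\mathbf{1}[t\ge c]$. Pushing these two indicators through the reformulated condition, the inequalities at every old value $t\ne c$ collapse back exactly to those of $(\a,\b)$ — the shifts in $A(t)$ and $B(t-1)$ cancel when $t>c$ and are both absent when $t<c$ — while the only new constraint lives at $t=c$ and reads
\[
(\star_c)\colon\quad A(c)+n+1\le B(c-1),
\]
the extra $+1$ coming from the inserted copy of $c$ sitting in both $\a$ and $\b$. Hence $(\a,\b)+c$ is admissible iff $(\a,\b)$ is admissible (this recovers \Cref{Generalization}) and $(\star_c)$ holds; the rank bound $r\ge n$ needed here is supplied by the admissibility of $(\a,\b)+c$ and is shared by all the pairs in sight, since they all have the same $r$.

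With this one-step criterion in hand the lemma is immediate. Taking $c<d$ without loss of generality, I would write $(\a,\b)+c+d=\bigl((\a,\b)+c\bigr)+d$. The pair $(\a,\b)+c$ is admissible by hypothesis, so by the criterion I only need $(\star_d)$ for $(\a,\b)+c$; but because $d>c$ both correction terms $\mathbf{1}[d\ge c]$ in $A$ and $\mathbf{1}[d-1\ge c]$ in $B$ equal $1$ and cancel, so this condition is literally $(\star_d)$ for the original pair $(\a,\b)$ — which holds since $(\a,\b)+d$ is admissible by hypothesis. The symmetric computation (adding $d$ first) reduces instead to $(\star_c)$ for $(\a,\b)$, confirming the answer is independent of order, as it must be since appending is commutative. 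I expect the only real work to be bookkeeping: pinning down the $+1$ in $(\star_c)$ and handling repeated entries by always testing at the largest index of each value. Once the one-step criterion is phrased cleanly, the key phenomenon — that the perturbation caused by the strictly smaller integer cancels when one tests the larger — makes the two-integer case fall out with no further estimates.
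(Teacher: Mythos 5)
Your proof is correct, and it reaches the same underlying cancellation phenomenon as the paper's, but it is organized around a different key device. The paper proves \Cref{Diamond} by a direct three-case verification of the inequalities $a'''_i>b'''_{i+n}$ on the doubly-extended pair, tracking the largest-index functions $p(\d,t)$ and checking the ranges $i<p(\a''',d)$, $p(\a''',d)\le i\le p(\b''',c)-n$, and $i>p(\b''',c)-n$ separately. You instead recast admissibility via the cumulative counting functions $A,B$ (which are the paper's $p(\d,t)$ in cumulative form) and extract an exact one-step criterion: $(\a,\b)+c$ is admissible iff $(\a,\b)$ is, $r\ge n$, and $A(c)+n+1\le B(c-1)$, with the $+1$ correctly accounting for the inserted entry appearing on both sides. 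The reduction of $(\star_d)$ for $(\a,\b)+c$ to $(\star_d)$ for $(\a,\b)$ when $c<d$ is exactly the paper's observation that the index shifts cancel for the larger integer; your equivalence checks out even when $c$ already occurs in $\a$ or when $\a$ is empty, and the rank bound is handled. What your packaging buys is reusability: the same criterion immediately gives \Cref{Generalization}, \Cref{Diamond}, and \Cref{Ladder} (and hence the join construction in \Cref{GradedLattice}) without separate inductions, at the cost of having to verify the iff-reformulation carefully at the tops of runs of equal entries, which you flag as the only real bookkeeping.
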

\begin{proof}
The lemma is simple, but the notations may make it appear more complicated than it is.
Notheless, we include a proof here for the sake of completeness.

For an ascending sequence $\d$ and an integer $t$, let $p(\d,t)$ denote the largest index $i$ where $d_i = t$. 
We may assume $c<d$, and write $(\a',\b') := (\a,\b)+c$, $(\a'',\b'') := (\a,\b)+d$ and $(\a''',\b''') := (\a,\b)+c+d$.

Since $(\a',\b')$ is admissible, we have $p(\a',c)<p(\b',c)-n$.
Since $c<d$, it follows that $p(\a''',c) = p(\a',c)$ and $p(\b''',c) = p(\b',c)$. We conclude that $p(\a''',c) < p(\b''',c)-n$.
Since $(\a'',\b'')$ is admissible, we have $p(\a'',d)<p(\b'',d)-n$.
Since $c<d$, it follows that $p(\a''',d) = p(\a'',d)+1$ and $p(\b''',d) = p(\b'',d)+1$.
We conclude that $p(\a''',d) < p(\b''',d)-n$.
Finally, we show that $(\a''',\b''')$ is admissible.
For $i<p(\a''',d)$, we have $i+n<p(\b''',d)$ and thus $a'''_i = a_i' > b_{i+n}' = b'''_{i+n}$.
For $p(\b''',c)-n < i$, we have $p(\a''',c)<i$ and thus $a'''_i = a_{i-1}'' > b_{i+n-1}'' = b'''_{i+n}$.
For $p(\a''',d)\le i \le p(\b''',c)-n$, we have $a_i \ge d > c \ge b_{i+n}$.
\end{proof}

\begin{lemma}\label{Ladder}
If $\c$ is an integer sequence and $d$ is an integer (considered as a singleton sequence) not appearing in $\c$, such that both $(\a,\b)+\c$ and $(\a,\b)+d$ are admissible, then so is $(\a,\b)+\c+d$.
\end{lemma}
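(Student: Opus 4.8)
The plan is to induct on the length of the sequence $\c$, peeling off one entry at a time and reducing to the single-element case already handled by \Cref{Diamond}. For the base case, when $\c$ is empty, the claim is that $(\a,\b)+\c+d = (\a,\b)+d$ is admissible, which holds by hypothesis.

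For the inductive step, suppose $\c$ has length $k\ge 1$ and write $\c = \c_0 + c_1$, where $c_1$ is any single entry of $\c$ and $\c_0$ is the remaining sequence of length $k-1$. First I would observe that $(\a,\b)+\c_0 \preceq (\a,\b)+\c$, since the latter is obtained from the former by appending the singleton $c_1$; as $(\a,\b)+\c$ is admissible, \Cref{Generalization} shows that $(\a,\b)+\c_0$ is admissible as well. Next I would apply the inductive hypothesis to the shorter sequence $\c_0$ together with the integer $d$: we have just shown $(\a,\b)+\c_0$ is admissible, $(\a,\b)+d$ is admissible by hypothesis, and $d$ does not appear in $\c_0$ because it does not appear in $\c$. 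This yields that $(\a,\b)+\c_0+d$ is admissible.

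Finally, setting $(\a',\b') := (\a,\b)+\c_0$, I would invoke \Cref{Diamond} with the two distinct integers $c_1$ and $d$: indeed $(\a',\b')+c_1 = (\a,\b)+\c$ and $(\a',\b')+d = (\a,\b)+\c_0+d$ are both admissible, while $c_1 \ne d$ since $d\notin\c$. \Cref{Diamond} then gives that $(\a',\b')+c_1+d = (\a,\b)+\c+d$ is admissible, completing the induction.

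The one point that requires care---and the only place the hypothesis $d\notin\c$ is used---is that each application of \Cref{Diamond} must pair the \emph{distinct} integers $c_1$ and $d$. This is precisely why repeated entries within $\c$ cause no difficulty: the induction separates off a single copy $c_1$ at each step and always pairs it with $d$ rather than with another copy of itself, so the distinctness hypothesis of \Cref{Diamond} is never violated. I do not expect any serious obstacle beyond correctly tracking the generalization and admissibility relations at each stage.
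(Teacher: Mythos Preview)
Your proof is correct and follows essentially the same approach as the paper: both arguments iterate \Cref{Diamond} along the entries of $\c$, using \Cref{Generalization} at each step to guarantee that the intermediate pair with a proper subsequence of $\c$ is still admissible. The paper phrases this as building up $c_1, c_2, \dots$ in order (first showing $(\a,\b)+c_1+d$ is admissible, then replacing $(\a,\b)$ by $(\a,\b)+c_1$ and repeating), while you state it as a formal induction on the length of $\c$; the logical content is the same.
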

\begin{proof}
By \Cref{Generalization} and \Cref{Diamond}, the pair $(\a,\b)+c_1+d$ is admissible.
Applying \Cref{Diamond} again with $(\a,\b)+c_1$ in place of $(\a,\b)$, we see that $(\a,\b)+c_1+c_2+d$ is admissible.
By induction it follows that $(\a,\b)+\c+d$ is admissible.
\end{proof}

\begin{proof}[Proof of \Cref{GradedLattice}]
If $(\a',\b')\in \Betti^i(H)$ and $(\a,\b)\in \Betti^j(H)$ such that $(\a',\b') =(\a,\b)+\c$ for some $\c$, then obviously $i\ge j$.
The cover relations in $\Betti(H)$ are given exactly by adding singleton sequences. 
It follows that $(\Betti(H),\preceq)$ is a graded poset.

Suppose $(\a,\b)$ and $(\a',\b')$ are in $\Betti(H)$.
By \Cref{Minimum}, there are sequences $\c$ and $\c'$ such that $(\a,\b) = (\aa,\bb)+\c$ and $(\a',\b') = (\aa,\bb)+\c'$.
We define $\min(\c,\c')$ to be the descending integer sequence where an integer $t$ occurs $\min(\mu(\c,t),\mu(\c',t))$ times, and similarly for $\max(\c,\c')$.

Clearly $(\a,\b)+\min(\c,\c') \preceq (\a,\b)+\c$ and thus is admissible by \Cref{Generalization}.
It follows that  $(\a,\b)+\min(\c,\c')$ is the meet of $(\a,\b)$ and $(\a',\b')$ in $\Betti(H)$.

We claim that $(\a,\b)+\max(\c,\c')$ is admissible, and thus it is the join of $(\a,\b)$ and $(\a',\b')$ in $\Betti(H)$.
To see this, we may replace $(\a,\b)$ by $(\a,\b)+\min(\c,\c')$ and assume that $\c$ and $\c'$ have no common entries.
By \Cref{Ladder}, we see that $(\a,\b)+\c+c_1'$ is admissible.
Applying \Cref{Ladder} again with $(\a,\b)+c_1'$ in place of $(\a,\b)$, we conclude that $(\a,\b)+c_1'+\c+c_2'$ is admissible.
By induction, it follows that $(\a,\b)+\c'+\c$ is admissible.
\end{proof}

For any integer $d$, let $\Betti(H)_{\le d}$ denote the subset of Betti numbers of bundles that are $d$-regular. 
The set $\Betti(H)_{\le d}$ inherits a grading $\bigoplus_{q\ge 0} \Betti^q(H)_{\le d}$, where $\Betti^q(H)_{\le d} := \Betti^q(H)\cap \Betti(H)_{\le d}$.

\begin{corollary}\label{Sublattice}
For any integer $d$, the set $\Betti(H)_{\le d}$ is a finite graded lattice isomorphic to the lattice of subsequences of some sequence $\c$.
\end{corollary}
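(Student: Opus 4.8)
The plan is to identify $\Betti(H)_{\le d}$ with the sub-multisets of a single largest sequence $\c^*$, exploiting the multiset description of the lattice structure already extracted in the proof of \Cref{GradedLattice}. By \Cref{Minimum}, every element of $\Betti(H)$ is uniquely of the form $(\aa,\bb)+\c$ for a finite multiset $\c$ of integers, and the proof of \Cref{GradedLattice} shows that under this identification the partial order $\preceq$ is the sub-multiset order on $\c$, with meet and join computed entrywise as $\min(\c,\c')$ and $\max(\c,\c')$. It therefore suffices to determine which $\c$ yield a $d$-regular admissible pair and to locate a maximum among them.

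First I would record how appending entries affects regularity. Since $\reg\big((\aa,\bb)+\c\big)=\max(b_{l+r},a_l-1)$ is read off from the top entries of $\a=\aa+\c$ and $\b=\bb+\c$, adjoining a value $c$ to $\c$ forces the regularity to be at least $c$. Hence $(\aa,\bb)+\c$ is $d$-regular exactly when $\reg(\aa,\bb)\le d$ and every entry of $\c$ is $\le d$. In particular, if $\reg(\aa,\bb)>d$ then $\Betti(H)_{\le d}$ is empty, because the minimum $(\aa,\bb)$ already fails to be $d$-regular and regularity only grows under specialization; I would dispose of this degenerate case and assume $\reg(\aa,\bb)\le d$, so that $(\aa,\bb)$ itself lies in $\Betti(H)_{\le d}$.

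Next I would show that $\Betti(H)_{\le d}$ is a finite set closed under the join operation. Finiteness follows from \Cref{Finite}: all bundles in $\VB(H)$ share the first Chern class $c_1=-\sum_t t\,\partial^{n+1}H(t)$ determined by $H$, so there are only finitely many Betti numbers of regularity $\le d$. Closure under join is immediate from the previous step, since the join $(\aa,\bb)+\max(\c,\c')$ has all its entries among those of $\c$ and $\c'$, hence all $\le d$, and so is again $d$-regular (and admissible, being a join in $\Betti(H)$). A finite, nonempty, join-closed subposet has a top element; I let $(\aa,\bb)+\c^*$ denote this maximum.

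Finally I would assemble the isomorphism. Every $(\aa,\bb)+\c$ in $\Betti(H)_{\le d}$ satisfies $\c\le\c^*$ as a multiset; conversely, for any sub-multiset $\c\le\c^*$ the pair $(\aa,\bb)+\c\preceq(\aa,\bb)+\c^*$ is admissible by \Cref{Generalization} and is $d$-regular since its entries lie among those of $\c^*$. Thus $\c\mapsto(\aa,\bb)+\c$ is a bijection from the sub-multisets of $\c^*$ onto $\Betti(H)_{\le d}$ that preserves $\preceq$, carries $\min$ and $\max$ to meet and join, and matches the gradings (the cardinality of $\c$ against the number $q$ of common entries), giving precisely an isomorphism onto the lattice of subsequences of $\c^*$. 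The main obstacle is the combination of the second and third steps: verifying that $d$-regularity interacts cleanly enough with the entrywise $\max$ that $\Betti(H)_{\le d}$ is join-closed and hence admits a top $\c^*$. This rests on the explicit regularity formula together with \Cref{Finite}; once $\c^*$ is in hand, the identification with subsequences is formal.
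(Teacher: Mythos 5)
Your proposal is correct and follows essentially the same route as the paper: regularity is controlled by the entries appended to $(\aa,\bb)$ (so $\Betti(H)_{\le d}$ is closed under meet and join), finiteness comes from \Cref{Finite}, the resulting maximum element $(\aa,\bb)+\c$ exists, and \Cref{Generalization} identifies the whole set with the subsequences of $\c$. Your explicit formula $\reg\big((\aa,\bb)+\c\big)=\max\big(\reg(\aa,\bb),\max\c\big)$ is just a sharper statement of the two regularity facts the paper invokes, so there is no substantive difference.
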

\begin{proof}
If $(\a,\b)\preceq (\a',\b')$, then $\reg (\a,\b)\le (\a',\b')$. 
If $(\a'',\b'')$ is the join of $(\a,\b)$ and $(\a',\b')$ in $\Betti(H)$, then the regularity of $(\a'',\b'')$ is the maximum of those of $(\a,\b)$ and $(\a',\b')$ by the construction in the proof of \Cref{GradedLattice}.
It follows that $\Betti(H)_{\le d}$ is a graded lattice.
The finiteness of $\Betti(H)_{\le d}$ follows from \Cref{Finite}.
Thus there is a maximum element of the form $(\aa,\bb)+\c$ for some sequence $\c$. By \Cref{Generalization}, we see that
\[
\Betti^q(H)_{\le d} = \{(\aa,\bb)+\c'\mid \c' \text{ is a subsequence of $\c$ of length $q$}\}. \qedhere
\]
\end{proof}

\begin{example}
Let $H$ be the Hilbert function of a normalized bundle on $\P^3$ with bundle sequence $(5,4)$.
With the same notation as in \Cref{Ex1}, the minimal element of $\Betti(H)$ is given by $\aa = (0)$ and $\bb = (-1^5)$.
The maximum element of $\Betti(H)_{\le 2}$ is $(\aa,\bb)+\c$, where $\c = (0,1,2)$.
In particular, 
\[
\Betti^q(H)_{\le 2} = \{(\aa,\bb)+\c'\mid \c' \text{ is a subsequence of $(0,1,2)$ of length $q$}\}
\]
and $\Betti(H)_{\le 2}$ is isomorphic to the lattice of subsequences of $(0,1,2)$.
\end{example}


\bigskip

\subsection{The stratification}\label{Moduli}

In this subsection, we define a natural topology on $\VB(H)$.
We then describe the stratification of $\VB(H)$ by locally closed subspaces $\VB(\a,\b)$.

\bigskip

\begin{definition}
Let $(\a,\b)\in \Betti(H)$. 
Let $\A(\a,\b)$ denote the structure of the affine space on the vector space $\Hom(\L(\a),\L(\b))$.
The minimal maps form an affine subspace $\A^0(\a,\b)$ in $\A(\a,\b)$.
We define the subset of matrices whose maximal minors have maximal depth
\[
\M(\a,\b) := \{\varphi\in \A(\a,\b)\mid \depth I_l(\varphi) \ge n+1\},
\]
\[
\M^0(\a,\b) := \{\varphi\in \A^0(\a,\b) \mid \depth I_l(\varphi) \ge n+1\}.
\] 
As in the proof of \Cref{Betti}, the subset $\M(\a,\b)$ and $\M^0(\a,\b)$ are open subvarieties of $\A(\a,\b)$ and $\A^0(\a,\b)$ respectively.
For $\A = \A(\a,\b)$ and $\A^0(\a,\b)$, the tautological morphism 
\[
\Phi: \bigoplus_{i = 1}^l \O_{\P^n_\A}(-a_i) \to \bigoplus_{i = 1}^{l+r} \O_{\P^n_\A}(-b_i)
\]
gives a tautological family of sheaves $\E := \coker \Phi$ over $\A$, which pulls back to a family of bundles $\E(\a,\b)$ and $\E^0(\a,\b)$ satisfying (\ref{Cohomology}) over $\M(\a,\b)$ and $\M^0(\a,\b)$ respectively by \Cref{Bundle}.
\end{definition}

Let $G(\a,\b)$ denote the algebraic group $\Aut(\L(\a))\times \Aut(\L(\b))$.
The natural action $\rho: G(\a,\b)\times \Hom(\L(\a),\L(\b)) \to \Hom(\L(\a),\L(\b))$ given by $(f,g) \times \varphi \mapsto  f \circ \varphi \circ g$
is a morphism of algebraic varieties.
The action $\rho$ leaves the subspace of minimal maps invariant.
Since the change of coordinates does not change the ideal of maximal minors, it follows that the open subvarieties $\M(\a,\b)$ and $\M^0(\a,\b)$ are stable under the $G(\a,\b)$-action.

\begin{lemma}\label{Orbit}
Two maps $\varphi, \psi \in \M(\a,\b)$ are in the same $G(\a,\b)$-orbit iff $\coker \varphi \cong \coker \psi$.
\end{lemma}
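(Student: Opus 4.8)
The plan is to recast the statement in terms of free resolutions of graded modules. Passing through the equivalence of \Cref{Resolution}, I would identify $\varphi$ and $\psi$ with maps of graded free $R$-modules $L(\a)\to L(\b)$, the group $G(\a,\b)$ with the graded automorphisms $\Aut(L(\a))\times\Aut(L(\b))$, and the sheaf cokernels with the graded module cokernels $M$ and $M'$; under this identification $\coker\varphi\cong\coker\psi$ as sheaves if and only if $M\cong M'$ as graded $R$-modules. The two maps thus become two short free resolutions $0\to L(\a)\xrightarrow{\varphi}L(\b)\to M\to 0$ and $0\to L(\a)\xrightarrow{\psi}L(\b)\to M'\to 0$, and the assertion becomes that they lie in the same $G(\a,\b)$-orbit exactly when $M\cong M'$.

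The forward implication is a routine diagram chase. Given $(f,g)\in G(\a,\b)$ with $\psi=f\circ\varphi\circ g$, the pair $(g^{-1},f)$ of vertical maps forms a commuting ladder between the two resolutions, the relevant square commuting because $\psi\circ g^{-1}=f\circ\varphi$. Since $g^{-1}$ and $f$ are isomorphisms, the induced map on cokernels is the desired isomorphism $M\cong M'$.

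For the converse, the essential difficulty is that the maps in $\M(\a,\b)$ need not be minimal, so I cannot simply quote the uniqueness of minimal free resolutions, and an arbitrary lift of a given isomorphism $\theta\colon M\xrightarrow{\sim}M'$ to a map $L(\b)\to L(\b)$ need not be invertible. Instead I would invoke the structure theorem for graded free resolutions over the graded local ring $R$: every such resolution is the direct sum of the minimal one and a trivial complex. Applied to our length-one resolutions, this exhibits $\varphi$, up to the $G(\a,\b)$-action, as $\varphi_{\min}\oplus\mathrm{id}_{L(\cc)}$, where $\varphi_{\min}$ is the minimal resolution of $M$ and $\cc$ is the multiset of cancelled degrees; likewise $\psi$ is carried to $\psi_{\min}\oplus\mathrm{id}_{L(\cc')}$. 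Since $M\cong M'$ forces the minimal Betti numbers to coincide, and both full resolutions share the same pair $(\a,\b)$, the trivial parts must agree, that is $\cc=\cc'$.

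It then remains to compare the minimal pieces. I would lift $\theta\colon M\xrightarrow{\sim}M'$ along the two minimal resolutions; by the uniqueness of minimal free resolutions this lift is an isomorphism of complexes, yielding $f_0\in\Aut(L(\bb))$ and $g_0\in\Aut(L(\aa))$ with $\psi_{\min}=f_0\circ\varphi_{\min}\circ g_0$. Extending by the identity on the common trivial summand $L(\cc)$ gives an element of $G(\a,\b)=\Aut(L(\a))\times\Aut(L(\b))$ taking $\varphi_{\min}\oplus\mathrm{id}$ to $\psi_{\min}\oplus\mathrm{id}$, and composing this with the two orbit identifications furnished by the structure theorem places $\varphi$ and $\psi$ in the same $G(\a,\b)$-orbit by transitivity. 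The main obstacle is precisely this routing through the minimal-plus-trivial decomposition, which is what guarantees that the lift of $\theta$ can be taken to be an automorphism in spite of the non-minimality of $\varphi$ and $\psi$.
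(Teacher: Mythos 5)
Your proof is correct and follows essentially the same route as the paper: identify the sheaf cokernels with the graded module cokernels via $H^0_*$ (using \Cref{Resolution}) and lift the module isomorphism to an isomorphism of the two length-one free resolutions, which is exactly an element of $G(\a,\b)$. The paper simply asserts that the lift is an isomorphism of resolutions; your detour through the minimal-plus-trivial decomposition supplies the justification of that assertion in the non-minimal case, which is added care rather than a different argument.
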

\begin{proof}
Clearly if $\varphi, \psi$ are in the same $G(\a,\b)$-orbit then $\coker \varphi \cong \coker \psi$.
Conversely, let $\E := \coker \varphi$ and $\E' := \coker \psi$.
Then the isomorphism of the $R$-modules $H^0_*(\E) \cong H^0_*(\E')$ lifts to an isomorphism of free resolutions
\[
\begin{tikzcd}
0 \arrow[r] & L(\a)\arrow[d,"f" ',"\cong"] \arrow[r,"\varphi"] & L(\b) \arrow[d,"g" ', "\cong"] \arrow[r] & H^0_*(\E) \arrow[d,"\cong"] \arrow[r] & 0\\
0 \arrow[r] & L(\a) \arrow[r,"\varphi'"] & L(\b) \arrow[r] & H^0_*(\E') \arrow[r] & 0.
\end{tikzcd}
\]
It follows that $\varphi, \varphi'$ are in the same $G(\a,\b)$-orbit. 
\end{proof}

\Cref{Bundle} and \Cref{Orbit} imply that the set $\VB(\a,\b)$ supports the structure of the quotient topological space $\M^0(\a,\b) / G(\a,\b)$.
Similarly, we let $\VB(\a,\b)^{\preceq}$ denote the subset of $\VB$ consisting of isomorphism classes of bundles $\E$ that admit a (not necessarily minimal) free resolution of the form
\[
0 \to L(\a) \to L(\b) \to H^0_*(\E) \to 0.
\]
Then \Cref{Orbit} also implies that the set $\VB(\a,\b)^{\preceq}$ supports the structure of the quotient topological space $\M(\a,\b) / G(\a,\b)$. 
Clearly the inclusion of sets $\VB(\a,\b) \subseteq \VB(\a,\b)^{\preceq}$ is an inclusion of topological spaces. 

\bigskip

\begin{lemma}\label{Subspace}
If $(\a,\b) \preceq (\a',\b')$ in $\Betti(H)$, then $\VB(\a,\b)^{\preceq}$ is a subspace of $\VB(\a',\b')^{\preceq}$.
In particular, $\VB(\a,\b)$ is a subspace of $\VB(\a',\b')^{\preceq}$.
\end{lemma}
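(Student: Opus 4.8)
The plan is to realize the inclusion on the level of presenting matrices and then show that the induced map of quotient spaces is an open topological embedding. Write $(\a',\b') = (\a,\b)+\c$, so that $\L(\a') = \L(\a)\oplus\L(\c)$ and $\L(\b') = \L(\b)\oplus\L(\c)$, and denote by $q$ and $q'$ the quotient maps onto $\VB(\a,\b)^{\preceq}=\M(\a,\b)/G(\a,\b)$ and $\VB(\a',\b')^{\preceq}=\M(\a',\b')/G(\a',\b')$. First I would record the set-theoretic inclusion: given $\E\in\VB(\a,\b)^{\preceq}$ with a resolution $0\to\L(\a)\xrightarrow{\varphi}\L(\b)\to\E\to 0$, taking the direct sum with $\op{id}_{\L(\c)}$ yields a resolution $0\to\L(\a')\xrightarrow{\varphi\oplus\op{id}}\L(\b')\to\E\to 0$, so $\E\in\VB(\a',\b')^{\preceq}$. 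At the matrix level this is the affine-linear map $\iota\colon\A(\a,\b)\to\A(\a',\b')$, $\varphi\mapsto\varphi\oplus\op{id}_{\L(\c)}$, a closed immersion. Since the block $\op{id}_{\L(\c)}$ has unit determinant, the ideals of maximal minors satisfy $I_{l'}(\iota\varphi)=I_l(\varphi)$ (where $l'$ is the length of $\a'$), whence $\iota^{-1}(\M(\a',\b'))=\M(\a,\b)$ and $\coker\iota\varphi=\coker\varphi$. Moreover $\iota$ is equivariant for the closed subgroup inclusion $G(\a,\b)\hookrightarrow G(\a',\b')$, $(f,g)\mapsto(f\oplus\op{id},g\oplus\op{id})$.

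Passing to quotients, the composite $q'\circ\iota$ is constant on $G(\a,\b)$-orbits and therefore descends to a continuous map $\bar\iota\colon\VB(\a,\b)^{\preceq}\to\VB(\a',\b')^{\preceq}$, which is precisely the set inclusion above. By \Cref{Orbit} applied over $\A(\a',\b')$, two maps $\iota\varphi,\iota\psi$ lie in one $G(\a',\b')$-orbit iff $\coker\varphi\cong\coker\psi$ iff $\varphi,\psi$ lie in one $G(\a,\b)$-orbit; hence the $G(\a',\b')$-orbit of $\iota\varphi$ meets the slice $\iota(\M(\a,\b))$ in the single $G(\a,\b)$-orbit of $\iota\varphi$, and $\bar\iota$ is injective. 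Since a continuous injective open map is a homeomorphism onto its (open) image, it then suffices to show that $\bar\iota$ is open. As $q'$ is a quotient map, this reduces to proving that for every open $O\subseteq\M(\a,\b)$ the $G(\a',\b')$-saturation $G(\a',\b')\cdot\iota(O)$ is open in $\M(\a',\b')$.

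The heart of the argument, and the step I expect to be the main obstacle, is exactly this openness of saturations. I would deduce it from smoothness of the action map $a\colon G(\a',\b')\times\M(\a,\b)\to\M(\a',\b')$, $(h,\varphi)\mapsto h\cdot\iota(\varphi)$, since then $G(\a',\b')\cdot\iota(O)=a\bigl(G(\a',\b')\times O\bigr)$ is the image of an open set under a flat, hence open, morphism. Smoothness I would verify by computing the differential of $a$ at a point $(e,\varphi)$: the tangent space $T_{\iota\varphi}\M(\a',\b')=\Hom(\L(\a'),\L(\b'))$ splits into the four blocks $\Hom(\L(\a),\L(\b))$, $\Hom(\L(\c),\L(\b))$, $\Hom(\L(\a),\L(\c))$ and $\Hom(\L(\c),\L(\c))$. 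The $\M(\a,\b)$-direction contributes all of the first block via $\delta\varphi\mapsto\delta\varphi\oplus 0$, while the infinitesimal $G(\a',\b')$-action sends $(\alpha,\beta)$ to $\beta\,\iota\varphi+\iota\varphi\,\alpha$; because the lower-right block of $\iota\varphi$ is the invertible $\op{id}_{\L(\c)}$, the entries $\beta_{12},\alpha_{21},\beta_{22}$ enter with coefficient $\op{id}_{\L(\c)}$ and so produce arbitrary elements of the three remaining blocks. Thus the differential is surjective at $(e,\varphi)$, and by $G(\a',\b')$-equivariance of $a$ it is surjective at every point, so $a$ is smooth and hence open. This proves $\bar\iota$ is an open embedding; in particular its image $\VB(\a,\b)^{\preceq}$ is an open subspace of $\VB(\a',\b')^{\preceq}$, consistent with the stratification picture. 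Finally, the ``in particular'' clause follows by composing with the subspace inclusion $\VB(\a,\b)\subseteq\VB(\a,\b)^{\preceq}$ already noted before the lemma.
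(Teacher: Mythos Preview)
Your argument is correct and in fact more complete than the paper's. Both you and the paper begin the same way: define $\iota\colon\M(\a,\b)\to\M(\a',\b')$ by $\varphi\mapsto\varphi\oplus\op{id}_{\L(\c)}$, note that $I_{l'}(\iota\varphi)=I_l(\varphi)$ so $\iota$ lands in $\M(\a',\b')$, and use \Cref{Orbit} to see that $G(\a',\b')$-orbits on the image pull back to $G(\a,\b)$-orbits, giving an injective continuous map $\bar\iota$ on quotients. The paper stops here, declaring that this ``induces an injection of topological spaces.''

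What you add is the verification that $\bar\iota$ is actually a homeomorphism onto its image, not merely a continuous injection. Your route---computing the differential of the action map $G(\a',\b')\times\M(\a,\b)\to\M(\a',\b')$ at $(e,\varphi)$, observing that the identity block of $\iota\varphi$ makes the off-diagonal and lower-right contributions surjective, and invoking openness of smooth morphisms---is clean and correct. This yields the stronger conclusion that $\VB(\a,\b)^{\preceq}$ is an \emph{open} subspace of $\VB(\a',\b')^{\preceq}$, which the paper does not state here but uses implicitly in \Cref{Dense} and in forming the direct limit $\VB(H)=\varinjlim_d\VB(H)_{\le d}$. So your extra work is not superfluous: the subspace property (as opposed to a bare continuous injection) is exactly what the subsequent constructions require, and the paper's proof leaves that step to the reader.
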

\begin{proof}
Let $(\a',\b') = (\a,\b)+\c$ for some $\c$. 
Consider an injective morphism $\iota: \M(\a,\b) \to \M(\a',\b')$ given by $\varphi \mapsto \varphi \oplus \op{Id}_{\L(\c)}$.
It is not hard to see that the ideal of maximal minors does not change under this map, and thus $\iota$ is well-defined.
Suppose $\varphi, \psi$ are two morphisms in $\M(\a,\b)$ such that $\varphi \oplus \op{Id}_{\L(\c)}$ and  $\psi \oplus \op{Id}_{\L(\c)}$ are in the same $G(\a',\b')$-orbit.
It follows that $\coker \varphi \oplus \op{Id}_{\L(\c)} \cong \coker  \psi \oplus \op{Id}_{\L(\c)}$. 
Since $\coker \varphi \cong \coker \varphi \oplus \op{Id}_{\L(\c)}$ and $\coker \psi \cong \coker \psi \oplus \op{Id}_{\L(\c)}$, we conclude that $\coker \varphi \cong \coker \psi$. 
It follows from \Cref{Orbit} that $\varphi$ and $\psi$ are in the same $G(\a,\b)$-orbit.
This shows that the composition 
\[
\M(\a,\b)\to \M(\a',\b') \to \VB(\a',\b')^{\preceq}
\]
 induces an injection of topological spaces on the quotient 
$\VB(\a,\b)^{\preceq} \hookrightarrow \VB(\a',\b')^{\preceq}$.
\end{proof}

\bigskip

For each integer $d$, the set $\Betti(H)_{\le d}$ is a lattice by \Cref{Sublattice} and thus has a maximum element $(\a',\b')$.
It follows from \Cref{Subspace} that every $d$-regular bundle $\E$ in $\VB(H)$ admits a (not necessarily minimal) free resolution of the form
\[
0 \to L(\a') \to L(\b') \to H^0_*(\E) \to 0.
\] 
Let $\VB(H)_{\le d}$ be the subspace of $\VB(H)$ consisting of isomorphism classes of $d$-regular bundles. 
Then by \Cref{Orbit}, the set $\VB(H)_{\le d}$ supports the structure of the quotient topological space $\M(\a',\b')/G(\a',\b')$.

It follows from \Cref{Subspace} and the construction above that if $d<d'$, then $\VB(H)_{\le d}$ is a subspace of $\VB(H)_{\le d'}$.
Finally, we define a topology on $\VB(H)$ by
\[
\VB(H) = \varinjlim_d \VB(H)_{\le d}.
\]

\begin{proposition}\label{Dense}
For each integer $d$, the subspace $\VB(H)_{\le d}$ is open in $\VB(H)$.
\end{proposition}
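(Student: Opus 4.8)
The plan is to unwind the colimit topology and reduce the statement to the semicontinuity of regularity. By the definition of the direct limit topology on $\VB(H) = \varinjlim_d \VB(H)_{\le d}$, a subset is open precisely when its intersection with $\VB(H)_{\le d'}$ is open in $\VB(H)_{\le d'}$ for every $d'$. So I would fix $d$ and test this condition for $\VB(H)_{\le d}$ against each $d'$. For $d' \le d$ we have $\VB(H)_{\le d'} \subseteq \VB(H)_{\le d}$, so the intersection is all of $\VB(H)_{\le d'}$ and there is nothing to check. The only substantive case is $d' > d$, where $\VB(H)_{\le d} \subseteq \VB(H)_{\le d'}$ and I must show that $\VB(H)_{\le d}$ is open in $\VB(H)_{\le d'}$.

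First I would recall that, letting $(\a',\b')$ be the maximum element of the finite lattice $\Betti(H)_{\le d'}$ furnished by \Cref{Sublattice}, the space $\VB(H)_{\le d'}$ is by construction the quotient $\M(\a',\b')/G(\a',\b')$; write $\pi$ for the quotient map. By the universal property of the quotient topology, $\VB(H)_{\le d}$ is open in $\VB(H)_{\le d'}$ if and only if $\pi^{-1}(\VB(H)_{\le d})$ is open in $\M(\a',\b')$. The next step is to identify this preimage. A point $\varphi \in \M(\a',\b')$ maps to the class of $\coker\varphi$, and since $d$-regularity implies $d'$-regularity for $d \le d'$, this class lies in $\VB(H)_{\le d}$ exactly when $\coker\varphi$ is $d$-regular. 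Hence $\pi^{-1}(\VB(H)_{\le d})$ is precisely the locus in $\M(\a',\b')$ over which the fiber of the tautological family $\E(\a',\b')$ is $d$-regular.

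The final step is to prove this locus open. The tautological family $\E(\a',\b')$ over $\M(\a',\b')$ is flat: fiberwise $\Phi$ is injective with locally free cokernel (this is exactly the content of \Cref{Bundle} on $\M(\a',\b')$), so tensoring the presentation $\L(\a') \xrightarrow{\Phi} \L(\b') \to \E \to 0$ with a residue field gives the Tor-vanishing that yields flatness over the base. Being $d$-regular is cut out by the finitely many vanishing conditions $H^i(\E_P(d-i)) = 0$ for $i \ge 1$ (and on bundles in $\VB$ only $i = n-1$ and $i = n$ are not already automatic), each of which is an open condition by the semicontinuity of cohomology in a flat family, as noted in the paragraph preceding \Cref{Finite}. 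Intersecting these finitely many open loci shows that $\pi^{-1}(\VB(H)_{\le d})$ is open, and the proof concludes.

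I expect the only point requiring care to be the bookkeeping that interleaves the colimit topology with the quotient topology: one must test openness fiber-by-fiber over each $\VB(H)_{\le d'}$ and then transport the condition through the quotient map $\pi$, taking care that the relevant $(\a',\b')$ is the maximum of $\Betti(H)_{\le d'}$ rather than that of $\Betti(H)_{\le d}$. The geometric input, namely that $d$-regularity is an open condition on a flat family, is standard and already invoked earlier, so once these identifications are in place the argument is essentially formal.
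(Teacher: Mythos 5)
Your proposal is correct and follows essentially the same route as the paper: reduce to showing $\VB(H)_{\le d}$ is open in $\VB(H)_{\le d'}$ for $d' > d$, pass to the quotient map $\pi:\M(\a',\b')\to \VB(H)_{\le d'}$ with $(\a',\b')$ the maximum of $\Betti(H)_{\le d'}$, and identify the preimage with the $d$-regular locus of the tautological family, which is open by semicontinuity. The extra care you take with the colimit bookkeeping and the flatness of the family is implicit in the paper's (terser) argument.
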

\begin{proof}
We need to show that $\VB(H)_{\le d}$ is open in $\VB(H)_{\le d'}$ for $d'\gg 0$.
Let $(\a',\b')$ be the maximum element in $\Betti(H)_{\le d'}$, and consider the quotient map $\pi:\M(\a',\b') \to \VB(H)_{\le d'}$.
By the semicontinuity of cohomologies, the fibers of the tautological family $\E(\a',\b')$ are $d$-regular over an open subset of $\M(\a',\b')$. 
It follows that $\VB(H)_{\le d}$ is the image of this open subset under $\pi$, and thus is an open subspace of $\VB(H)_{\le d'}$.
\end{proof}

\begin{proposition}
The topological space $\VB(H)$ is irreducible and unirational.
\end{proposition}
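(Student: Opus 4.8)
The plan is to reduce the statement to two facts: that each open piece $\VB(H)_{\le d}$ is irreducible, and that an increasing union of irreducible subspaces is irreducible. For the first fact, recall from the construction preceding \Cref{Dense} that if $(\a',\b')$ denotes the maximum element of the finite lattice $\Betti(H)_{\le d}$ (which is nonempty once $d\ge \reg(\aa,\bb)$), then $\VB(H)_{\le d}$ carries the quotient topology of $\M(\a',\b')/G(\a',\b')$. The space $\M(\a',\b')$ is a nonempty open subvariety of the affine space $\A(\a',\b') = \Hom(\L(\a'),\L(\b'))$, hence is irreducible; since the quotient map is a continuous surjection, its image $\VB(H)_{\le d}$ is irreducible as well.

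For the second fact, I would invoke the following elementary topological lemma: if a space $X$ is the union of a chain $U_{d_0}\subseteq U_{d_0+1}\subseteq\cdots$ of irreducible subspaces, then $X$ is irreducible. (Briefly: writing $X = Z_1 \cup Z_2$ with the $Z_j$ closed, each irreducible $U_d$ lies in $Z_1$ or in $Z_2$, and by cofinality of the chain one of the two containments holds for \emph{all} $d$, forcing $X = Z_1$ or $X = Z_2$.) Applying this to $\VB(H)=\bigcup_d \VB(H)_{\le d}$ yields irreducibility, provided the given topology on each $\VB(H)_{\le d}$ agrees with the subspace topology it inherits from $\VB(H)$; this compatibility is exactly what \Cref{Dense} provides, since each $\VB(H)_{\le d}$ is open in $\VB(H)$.

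Finally, for unirationality I would exhibit a dominant map from a rational variety. Fix any $d$ with $\VB(H)_{\le d}\ne\emptyset$ and let $(\a',\b')$ be as above. The composite $\M(\a',\b') \to \VB(H)_{\le d} \hookrightarrow \VB(H)$ is a continuous map out of $\M(\a',\b')$, which, being a nonempty open subvariety of an affine space, is a rational variety. Its image $\VB(H)_{\le d}$ is a nonempty open subset of the (now irreducible) space $\VB(H)$ and is therefore dense. This realizes $\VB(H)$ as the image of a dense open subspace of a rational variety, which is the sense in which it is unirational.

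I expect the only genuine point requiring care to be the irreducibility of the infinite increasing union: a priori a colimit of irreducible spaces along open immersions need not be irreducible, and it is the totally ordered (chain) structure of the cover, together with the openness supplied by \Cref{Dense}, that rescues the conclusion. Everything else is formal, namely the continuity and surjectivity of the quotient maps and the rationality of an open subvariety of affine space.
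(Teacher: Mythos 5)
Your proposal is correct and follows essentially the same route as the paper: both arguments rest on the fact that each $\VB(H)_{\le d}$ is the continuous image of the irreducible rational variety $\M(\a',\b')$ and then pass to the increasing union. The paper compresses the last step into the assertion that $\VB(H)_{\le d}$ is dense in $\VB(H)$ for $d\gg 0$, whereas you justify it via the chain-of-irreducibles lemma together with \Cref{Dense}; these are equivalent, and your version simply makes the elementary topology explicit.
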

\begin{proof}
For $d\gg 0$, the subspace $\VB(H)_{\le d}$ is dense in $\VB(H)$. 
Since $\VB(H)_{\le d}$ is the quotient of $\M(\a',\b')$, where $(\a',\b')$ is the maximum element of $\Betti(H)_{\le d}$, it follows that $\VB(H)_{\le d}$ is irreducible and unirational, and so is $\VB(H)$.
\end{proof}

\bigskip
The main result of this subsection is the following.

\begin{theorem}\label{Main}
The closed strata $\overline{\VB(\a,\b)}$ in $\VB(H)$ form a graded lattice dual to $\Betti(H)$ under the partial order of inclusion.
Furthermore, the intersection of two closed strata $\overline{\VB(\a,\b)}$ and $\overline{\VB(\a',\b')}$ is again a closed stratum $\overline{\VB(\a'',\b'')}$, where $(\a'',\b'')$ is the join of $(\a,\b)$ and $(\a',\b')$ in the lattice $\Betti(H)$.
\end{theorem}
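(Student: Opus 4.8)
\subsection*{Proof proposal}

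The plan is to reduce both assertions to the single set-theoretic identity
\[
\overline{\VB(\a,\b)} \;=\; \bigsqcup_{(\a',\b')\succeq(\a,\b)} \VB(\a',\b'), \tag{$\heartsuit$}
\]
with closure taken in $\VB(H)$; once $(\heartsuit)$ is established everything is formal. Granting it, the assignment $(\a,\b)\mapsto\overline{\VB(\a,\b)}$ sends $\Betti(H)$ bijectively onto the set of closed strata, and $\overline{\VB(\a',\b')}\subseteq\overline{\VB(\a,\b)}$ holds exactly when the principal up-set of $(\a',\b')$ is contained in that of $(\a,\b)$, i.e. exactly when $(\a,\b)\preceq(\a',\b')$. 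Thus the map is an order-reversing bijection and carries the graded lattice $\Betti(H)$ of \Cref{GradedLattice} to its order dual; the cover relations match because, by \Cref{GradedLattice}, covers in $\Betti(H)$ are exactly adjunctions of a singleton, so the dual lattice of closed strata is again graded. The intersection formula is then immediate: by $(\heartsuit)$ the set $\overline{\VB(\a,\b)}\cap\overline{\VB(\a',\b')}$ is the union of those $\VB(\a'',\b'')$ with $(\a'',\b'')\succeq(\a,\b)$ and $(\a'',\b'')\succeq(\a',\b')$, and since the join is the least upper bound this up-set equals $\{(\a'',\b'')\succeq\text{join}\}$, which is $\overline{\VB(\text{join})}$ again by $(\heartsuit)$.

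I will work inside one $\VB(H)_{\le d}=\M(\a^*,\b^*)/G(\a^*,\b^*)$, open in $\VB(H)$ by \Cref{Dense}, where $(\a^*,\b^*)$ is the maximum of $\Betti(H)_{\le d}$ from \Cref{Sublattice}. The one preliminary remark is that, since every bundle in $\VB(H)$ has the same $\partial^{n+1}H(t)=\mu(\b,t)-\mu(\a,t)$ (proof of \Cref{Hilbert}), the order $\preceq$ on $\Betti(H)$ is just pointwise comparison of multiplicities: $(\a,\b)\preceq(\a',\b')$ iff $\mu(\b,t)\le\mu(\b',t)$ for all $t$. For the inclusion $\subseteq$ in $(\heartsuit)$ I then use upper semicontinuity of Betti numbers. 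For $\varphi\in\M(\a^*,\b^*)$ the minimal number of degree-$t$ generators of $\coker\varphi$ is $\mu(\b^*,t)-\rank(\varphi\otimes_R k)_t$, where $(\varphi\otimes_R k)_t$ is the scalar block of $\varphi$ between degree-$t$ summands; as rank is lower semicontinuous, the $G(\a^*,\b^*)$-stable locus $\{\varphi:\mathrm{Betti}(\coker\varphi)\succeq(\a,\b)\}$ is closed, so its image is closed in $\VB(H)_{\le d}$ and contains $\VB(\a,\b)$. Since $\VB(H)_{\le d}$ is open, closure in $\VB(H)$ restricts to closure here, and letting $d$ grow shows $\overline{\VB(\a,\b)}$ meets no stratum outside $\{\succeq(\a,\b)\}$.

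The substance is the reverse inclusion: every $\VB(\a',\b')$ with $(\a',\b')\succeq(\a,\b)$ lies in $\overline{\VB(\a,\b)}$. Writing $(\a',\b')=(\a,\b)+\c$, I take an arbitrary $\E'\in\VB(\a',\b')$ with minimal presentation $\varphi'=\left(\begin{smallmatrix}M&U\\ V&W\end{smallmatrix}\right)$, the block decomposition separating the $\c$-generators and $\c$-relations so that $W$ is the corner $\L(\c)\to\L(\c)$, and form the family $\varphi'_t=\left(\begin{smallmatrix}M&U\\ V&W+tI\end{smallmatrix}\right)$. By \Cref{DepthOpen} the depth condition of \Cref{Bundle} holds on a cofinite open set of $t$ containing $0$, where $\varphi'_t\in\M(\a',\b')$; at $t=0$ it recovers $\E'$. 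For $t\ne0$ the corner $W+tI$ is invertible, since minimality of $\varphi'$ forces the diagonal of $W$ to vanish, making $W$ strictly triangular with $\det(W+tI)=t^{|\c|}$; hence $\coker\varphi'_t\cong\coker M_t$ with $M_t=M-U(W+tI)^{-1}V$ of shape $(\a,\b)$. The crucial point — and the step I expect to be the main obstacle — is that $M_t$ is again \emph{minimal}, so that $\coker\varphi'_t$ has Betti numbers exactly $(\a,\b)$ rather than something strictly smaller. This is precisely where minimality of $\varphi'$ is used: it makes every degree-$0$ entry of $U$ and of $V$ vanish, which forces the degree-$0$ entries of the correction term $U(W+tI)^{-1}V$ to vanish as well. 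The cover case $\c=(c)$, where $W+tI$ is the single scalar $t$, makes this verification most transparent, and the general case follows directly or by induction along a maximal chain using transitivity of closure.

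Granting minimality of $M_t$, the family defines a continuous map from an irreducible curve into $\VB(H)$ (factoring through some $\VB(H)_{\le d'}$ and then into the colimit) whose generic point lands in $\VB(\a,\b)$ and whose special point is $\E'$, so $\E'\in\overline{\VB(\a,\b)}$. Combining the two inclusions gives $(\heartsuit)$, and the formal deductions of the first paragraph complete the proof. The only genuine risks I foresee are the minimality check for $M_t$ just described, and the routine point-set verification that closures computed in the open pieces $\VB(H)_{\le d}$ assemble correctly in the colimit topology.
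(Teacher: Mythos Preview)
Your proof is correct and follows the same overall strategy as the paper --- establish that $\overline{\VB(\a,\b)}$ is exactly the union of strata indexed by the up-set of $(\a,\b)$, via a one-parameter degeneration for one inclusion and semicontinuity of Betti numbers for the other --- but your implementation of both ingredients is somewhat cleaner than the paper's. For the degeneration, the paper (\Cref{Limit}) chooses an auxiliary $\varphi\in\M^0(\a,\b)$ and forms $\Phi_t=\psi+t(\varphi\oplus\op{Id}_{\L(\c)})$, whereas you perturb only the corner block by $tI$; your Schur-complement argument that $M_t$ stays minimal is the same computation hidden in the paper's appeal to Nakayama, but avoids the extraneous choice of $\varphi$. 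For semicontinuity, the paper (\Cref{Limit2}) lifts a minimal resolution over a local base and specializes, while you read off the number of degree-$t$ generators directly as $\mu(\b^*,t)-\rank(\varphi\otimes_R k)_t$ and use lower semicontinuity of rank; your observation that within $\Betti(H)$ the order $\preceq$ is just pointwise comparison of $\mu(\b,-)$ is what makes this shortcut work. The paper packages these as the equivalence of four conditions in \Cref{Dual} and then deduces the theorem; your single identity $(\heartsuit)$ is the more economical formulation and makes the lattice-dual and intersection statements genuinely formal.
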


The theorem needs several standard lemmas on the behavior of resolutions in families with constant Hilbert functions.
We include proofs here for the lack of appropriate references.

\begin{lemma}\label{Limit}
Let $\E' \in \VB(\a',\b')$ and suppose $(\a,\b)\preceq (\a',\b')$. 
Then there is a family of bundles $\E$ on $\P^n$ over a dense open set $U\subset \A^1$ containing the origin $0\in \A^1$, such that $\E_0\cong \E'$ and $\E_t\in \VB(\a,\b)$ for any closed point $0\ne t\in U$.
\end{lemma}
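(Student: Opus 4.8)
The plan is to exhibit the degeneration explicitly as a line in the affine space $\A(\a',\b')$ of presentation matrices. Since $(\a,\b)\preceq(\a',\b')$, write $(\a',\b')=(\a,\b)+\c$ with $\c$ of length $q$, and after reordering the freely permutable bases identify $\L(\a')=\L(\a)\oplus\L(\c)$ and $\L(\b')=\L(\b)\oplus\L(\c)$. Because $\E'\in\VB(\a',\b')$, it is the cokernel of a minimal map $\varphi'\in\M^0(\a',\b')$ by \Cref{Orbit}; write it in block form relative to these splittings as
\[
\varphi'=\begin{pmatrix}\varphi'_{11} & \varphi'_{12}\\ \varphi'_{21} & \varphi'_{22}\end{pmatrix},\qquad \varphi'_{22}\colon\L(\c)\to\L(\c).
\]
I would then set $\varphi_t:=\varphi'+t\,(0\oplus\op{Id}_{\L(\c)})$, a morphism $\A^1\to\A(\a',\b')$, let $\Phi$ be the associated map of sheaves on $\P^n\times\A^1$, and take $\E:=\coker\Phi$.

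At the origin $\varphi_0=\varphi'$, so $\E_0\cong\E'$ is a bundle and $\depth I_{l+q}(\varphi_0)\ge n+1$ by \Cref{Bundle} (the maximal minors of $\varphi_t$ have size $l+q=\rank\L(\a')$). Applying \Cref{DepthOpen} over $A=k[t]$, the locus $U\subseteq\A^1$ where $\depth I_{l+q}(\varphi_t)\ge n+1$ is open; it contains $0$ and is therefore dense. Over $U$ the sheaf $\E$ is a family of bundles by \Cref{Bundle}, which simultaneously records $\E_0\cong\E'$ and the bundle property over all of $U$.

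It remains to compute the Betti numbers of $\E_t$ for $0\ne t\in U$. Grading the basis of $\L(\c)$ by the values of $\c$, each entry of $\varphi'_{22}$ is a form of degree $c_j-c_i$, which vanishes unless $c_j>c_i$ (the degree-$0$ diagonal entries vanish by minimality of $\varphi'$); hence $\varphi'_{22}$ is strictly block-upper-triangular, so nilpotent, and $\varphi'_{22}+t\,\op{Id}_{\L(\c)}$ is invertible for $t\ne0$. Block Gaussian elimination by invertible changes of basis then diagonalizes $\varphi_t$ as $\tilde\varphi_t\oplus(\varphi'_{22}+t\,\op{Id})$, where the Schur complement $\tilde\varphi_t=\varphi'_{11}-\varphi'_{12}(\varphi'_{22}+t\,\op{Id})^{-1}\varphi'_{21}\colon\L(\a)\to\L(\b)$; since the invertible summand contributes nothing to the cokernel, $\E_t\cong\coker\tilde\varphi_t$. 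Finally, all entries of $\varphi'_{11},\varphi'_{12},\varphi'_{21}$ lie in $m$ by minimality of $\varphi'$, so the correction term lies in $m\cdot R\cdot m\subseteq m$ and $\tilde\varphi_t\otimes k=0$; thus $0\to\L(\a)\to\L(\b)\to H^0_*(\E_t)\to0$ is a minimal resolution and $\E_t\in\VB(\a,\b)$. The step I expect to be the main obstacle is exactly this last analysis of the generic fiber: one must confirm that the Schur complement stays minimal, so that the fibers acquire Betti numbers exactly $(\a,\b)$ rather than some intermediate generalization, and that the block operations commute with forming cokernels and restricting to fibers.
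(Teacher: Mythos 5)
Your proof is correct, but it takes a genuinely different route from the paper's. The paper first uses \Cref{Generalization} and the existence half of \Cref{Betti} to produce an auxiliary minimal presentation $\varphi\in\M^0(\a,\b)$ of some bundle with the smaller Betti numbers, and degenerates along the line $\Phi_t=\psi+t\,(\varphi\oplus\op{Id}_{\L(\c)})$; the Betti numbers of the generic fiber are then identified by splitting off the summand $\L(\c)\xrightarrow{t}\L(\c)$ and invoking Nakayama's lemma to see that nothing further splits. You instead perturb only the $\L(\c)$-diagonal block of the given presentation $\varphi'$ by $t\cdot\op{Id}$ and compute the generic fiber as the cokernel of the Schur complement $\tilde\varphi_t$. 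Your version is more self-contained: it needs no auxiliary bundle in $\VB(\a,\b)$, and the minimality of the residual presentation is transparent since the correction term $\varphi'_{12}(\varphi'_{22}+t\op{Id})^{-1}\varphi'_{21}$ visibly has entries in $m^2$. The key points all check out: ordering $\c$ ascendingly forces $\varphi'_{22}$ to be strictly triangular (minimality kills the constant entries between equal twists), so $\det(\varphi'_{22}+t\op{Id})=t^q$ and the block is invertible for $t\ne 0$; the unipotent row and column operations are graded automorphisms, so $I_{l+q}(\varphi_t)=\det(\varphi'_{22}+t\op{Id})\cdot I_l(\tilde\varphi_t)=I_l(\tilde\varphi_t)$, whence $\depth I_l(\tilde\varphi_t)\ge n+1$ on $U$ and \Cref{Bundle} applies to $\tilde\varphi_t$ directly, putting the fibers exactly in $\VB(\a,\b)$ rather than in some intermediate stratum. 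The only thing the paper's construction buys over yours is that the same explicit line through a prescribed pair $(\psi,\varphi)$ is reused verbatim in the proof of \Cref{Dual}; your line would serve there equally well.
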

\begin{proof}
Suppose $(\a',\b') = (\a,\b)+\c$. 
By \Cref{Generalization}, the pair $(\a,\b)$ is admissible.
Let $\psi \in \M^0(\a',\b')$ be a minimal presentation of $\E'$, and let $\varphi \in \M^0(\a,\b)$ be a minimal presentation of a bundle $\E$. 
Set $\varphi' = \varphi\oplus \op{Id}_{\L(\c)}$ and consider the morphism $\Phi: \L(\b')\times \A^1 \to \L(\a')\times \A^1$ whose fiber over a closed point $t\in \A^1$ is given by $\Phi_t := \psi+t\cdot \varphi'$.
By \Cref{DepthOpen}, the morphism $\Phi_t \in \M(\a',\b')$ for all closed points $t$ in an open dense set $U\subset \A^1$ containing $0$.
This shows that $\coker \Phi_t \in \VB(\a,\b)^{\preceq}$ for $t\in U$.
We show that in fact $\coker \Phi_t \in \VB(\a,\b)$ for all $0\ne t\in U$.
Let $t\ne 0$ be any closed point of $U$. 
Since $\psi$ is minimal and $\varphi'$ induces an isomorphism on the common summand $\L(\c) \xrightarrow{\sim} \L(\c)$, it follows that $\Phi_t$ also splits off the common summand $\L(\c) \xrightarrow{t} \L(\c)$. 
Since $\varphi$ does not split off any common summands other than those of $\L(\c)$, neither does $\Phi_t$ by Nakayama's lemma.
It follows that the free resolution 
\[
0 \to L(\a') \xrightarrow{\Phi_t} L(\b') \to H^0(\E_t) \to 0
\]
contains a minimal one of the form
\[
0 \to L(\a) \to L(\b) \to H^0_*(\E_t) \to 0. \qedhere
\]
\end{proof}

\begin{lemma}\label{Limit2}
Let $\E$ be a family of bundles on $\P^n$ satisfying (\ref{Cohomology}) parametrized by a variety $T$, such that all fibers have the same Hilbert function $H$. 
Then general fibers have the same Betti numbers $(\a,\b)$, where $(\a,\b)\preceq (\a',\b')$ for the Betti numbers $(\a',\b')$ of any fiber $\E_t$.
\end{lemma}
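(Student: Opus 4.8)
The plan is to deduce the lemma from the upper semicontinuity of the graded Betti numbers of the fibers, after first observing that the generalization relation $\preceq$ is controlled entirely by these numbers. For a fiber $\E_t$ with Betti numbers $(\a_t,\b_t)$, write $\beta_{0,j}(t):=\mu(\b_t,j)$ and $\beta_{1,j}(t):=\mu(\a_t,j)$. Since every fiber has the same Hilbert function $H$, the computation in the proof of \Cref{Hilbert} gives
\[
\beta_{0,j}(t)-\beta_{1,j}(t)=\partial^{n+1}H(j)
\]
independently of $t$. Consequently, for two fibers $\E_t,\E_{t'}$ one has $(\a_t,\b_t)\preceq(\a_{t'},\b_{t'})$ if and only if $\beta_{0,j}(t)\le\beta_{0,j}(t')$ for all $j$: the common differences force $\beta_{0,j}(t')-\beta_{0,j}(t)=\beta_{1,j}(t')-\beta_{1,j}(t)$, and this nonnegative function of $j$ is precisely the multiplicity function of the sequence $\c$ with $(\a_{t'},\b_{t'})=(\a_t,\b_t)+\c$. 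Thus it suffices to prove that each function $t\mapsto\beta_{0,j}(t)$ is upper semicontinuous and that all of them attain their minima simultaneously on a dense open subset of $T$.

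Before addressing semicontinuity I would record the relevant finiteness. As $T$ is a variety and $\E$ is $T$-flat, being a family of bundles, the family is bounded, so there is a uniform $d$ with every fiber $d$-regular. By \Cref{Finite} and \Cref{Sublattice} only finitely many Betti numbers occur among the fibers, all lying in the finite lattice $\Betti(H)_{\le d}$; in particular only finitely many degrees $j$ carry a nonzero $\beta_{i,j}$, uniformly in $t$. Hence a simultaneous minimum over finitely many upper semicontinuous functions is attained on a finite intersection of dense opens, so the notion of a general fiber is unambiguous.

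The technical heart is the semicontinuity, which I would obtain by assembling the fiberwise presentations into a single morphism to a matrix space. Let $(\a_{\max},\b_{\max})$ be the maximum element of $\Betti(H)_{\le d}$. By the discussion following \Cref{Subspace}, every fiber admits a (not necessarily minimal) resolution $0\to L(\a_{\max})\to L(\b_{\max})\to H^0_*(\E_t)\to 0$. Writing $\pi:\P^n_T\to T$, the sheaves $\pi_*\E(j)$ are locally free and compatible with base change for $j\ge d$ by $d$-regularity and cohomology-and-base-change; after shrinking $T$ to a dense open $U$ on which these are free with chosen bases, I would realize the shape $(\a_{\max},\b_{\max})$ by a relative presentation, i.e. a morphism $U\to\M(\a_{\max},\b_{\max})$ whose fiber at $t$ is a presentation $\varphi_t$ of $H^0_*(\E_t)$. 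Tensoring the fiber resolution with $k$ over $R$ identifies $\Tor^R_\bullet(H^0_*(\E_t),k)$ in each degree with the homology of $\varphi_t\otimes_R k$, whence
\[
\beta_{0,j}(t)=\mu(\b_{\max},j)-\rank\bigl((\varphi_t\otimes_R k)_j\bigr),
\]
where $(\varphi_t\otimes_R k)_j$ is the scalar block of $\varphi_t$ coupling the degree-$j$ summands on each side. Since matrix rank is lower semicontinuous in $t$, each $\beta_{0,j}$ is upper semicontinuous, and by the finiteness above all of them are minimal on a common dense open $U'\subseteq U$.

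Putting these together finishes the proof: the fibers over $U'$ all have the same, minimal, Betti numbers $(\a,\b)$, and for an arbitrary fiber $\E_{t_0}$ with Betti numbers $(\a',\b')$, semicontinuity gives $\beta_{0,j}(t_0)\ge\beta_{0,j}(t)$ for $t\in U'$, so $(\a,\b)\preceq(\a',\b')$ by the first paragraph. I expect the main obstacle to be the assembly step: producing, over a dense open of $T$, a genuine relative presentation of the prescribed shape $(\a_{\max},\b_{\max})$ whose fibers are the pointwise presentations, which requires the boundedness, the base-change compatibility of $\pi_*\E(j)$, and a trivialization of the locally free sheaves $\pi_*\E(j)$ that is only available after shrinking $T$.
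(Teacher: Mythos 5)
Your reduction of $\preceq$ to termwise comparison of the $\beta_{0,j}$ via the constant differences $\beta_{0,j}-\beta_{1,j}=\partial^{n+1}H(j)$ is correct, and upper semicontinuity of graded Betti numbers is indeed the right phenomenon; this is a genuinely different route from the paper's. But there are two problems with the execution. The first is the one you flag yourself: you never actually produce the relative presentation of shape $(\a_{\max},\b_{\max})$. Freeness of the sheaves $\pi_*\E(j)$ after shrinking is not enough; you must choose, degree by degree, sections of $\pi_*\E(j)$ whose images in $H^0_*(\E_t)\otimes_R k$ complement the span of the lower-degree generators at \emph{every} point $t$ of the open set, and then repeat the construction for the syzygy module. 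This can be done, but only after further shrinking at each degree, and it is exactly where the content of the lemma lives, so it cannot be left as an "expected obstacle."

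The second problem is more serious and is created by the first: every shrinking discards closed points of $T$, yet the lemma asserts $(\a,\b)\preceq(\a',\b')$ for the Betti numbers of \emph{any} fiber $\E_{t_0}$, including $t_0$ outside your open set $U$. Your final sentence invokes semicontinuity at an arbitrary $t_0$, but you only established semicontinuity of $\beta_{0,j}$ on $U$, so fibers over $T\setminus U$ are not controlled at all. This matters in the application (\Cref{Dual}, (3)$\Rightarrow$(1)), where the relevant fiber lies in a smaller stratum and need not survive any generic shrinking. The fix is to anchor the construction at the special point rather than at the generic one: the paper base-changes to $\Spec \O_{T,t}$ for an arbitrary closed point $t$, lifts a \emph{minimal} generating set of $H^0_*(\E_t)\otimes_R k$ by Nakayama (which makes the fiberwise-surjectivity issue above automatic), uses flatness to iterate, and then restricts the resulting resolution of shape $(\a_t,\b_t)$ to the generic point, where it contains the minimal resolution as a direct summand. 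Alternatively, you could arrange your open set to contain any prescribed $t_0$ by choosing the complementing sections generally for $t_0$ and the generic point simultaneously; either way the argument must be made to see the special fibers, not only the general ones.
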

\begin{proof}
Let $t\in T$ be a closed point.
We may base change to $\Spec \O_{T,t}$ and reduce to the case where $T$ is an affine local domain.  
Let $m$ be the maximal ideal of $T$ with residue field $k$ and set $R_T := T[x,y,z]$ and $R := k[x,y,z]$. 
The module $E:= \bigoplus_{l\in\mathbb{Z}} H^0(\E(l))$ is finitely generated over $R_T$ since $\E$ is a bundle. 
Since the fibers over $T$ have the same Hilbert functions, it follows that $E$ is flat over $T$.
If $\bigoplus_{i = 1}^{l+r} R(-b'_i) \xrightarrow{d} E\otimes_T k$ is a minimal system of generators, then by Nakayama's lemma over generalized local rings, it lifts to a system of generators
$\bigoplus_{i = 1}^{l+r} R_T(-b'_i) \xrightarrow{d_T} E$. 
Since $E$ is flat over $T$, so is $\ker d_T$ and thus $(\ker d_T) \otimes_T k \cong \ker d$.
Applying this procedure again, we find a free resolution of $E$
\[
F_\bullet:\quad 0 \to \bigoplus_{i = 1}^l R_T(-a'_i) \to \bigoplus_{i = 1}^{l+r} R_T(-b'_i) \to E \to 0
\]
that specializes to a minimal free resolution of $E\otimes_T k$.
It follows that $F_\bullet \otimes_T k(T)$ is a free resolution of the generic fiber which contains a minimal free resolution of the form 
\[
0\to \bigoplus_{i = 1}^j R_T(-a_i)\otimes_T k(T) \to \bigoplus_{i = 1}^{j+r} R_T(-b_i)\otimes_T k(T)\to E\otimes_T k(T) \to 0.
\]
We conclude that the general fibers $\E_t$ have the Betti numbers $(\a,\b)\preceq (\a',\b')$.
\end{proof}

\begin{lemma}\label{Dual}
For $(\a,\b),(\a',\b')\in \Betti(H)$ the following are equivalent.
\begin{enumerate}
\item $(\a,\b) \preceq (\a',\b')$,
\item $\overline{\VB(\a,\b)} \supseteq \VB(\a',\b')$,
\item $\VB(\a',\b')\cap \overline{\VB(\a,\b)} \ne \varnothing$,
\item $\VB(\a,\b)\subseteq \VB(\a',\b')^{\preceq}$. 
\end{enumerate}
Here all closures are taken within $\VB(H)$.
\end{lemma}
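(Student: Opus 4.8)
The plan is to prove the cyclic chain of implications $(1) \Rightarrow (2) \Rightarrow (3) \Rightarrow (4) \Rightarrow (1)$, leveraging the degeneration lemmas just established. The implications $(2) \Rightarrow (3)$ is immediate since $\VB(\a',\b')$ is nonempty whenever $(\a',\b') \in \Betti(H)$, so $\VB(\a',\b') \cap \overline{\VB(\a,\b)} = \VB(\a',\b') \neq \varnothing$. For $(1) \Rightarrow (2)$, I would use \Cref{Limit}: given $(\a,\b) \preceq (\a',\b')$ and any $\E' \in \VB(\a',\b')$, the lemma produces a family over a punctured neighborhood of the origin in $\A^1$ whose special fiber is $\E'$ and whose general fibers lie in $\VB(\a,\b)$. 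This exhibits $\E'$ as a limit of bundles in $\VB(\a,\b)$, hence $\E' \in \overline{\VB(\a,\b)}$; since $\E'$ was arbitrary, $\VB(\a',\b') \subseteq \overline{\VB(\a,\b)}$.

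For $(4) \Rightarrow (1)$, the key observation is that $\VB(\a,\b) \subseteq \VB(\a',\b')^{\preceq}$ means every $\E \in \VB(\a,\b)$ admits a (possibly non-minimal) resolution of the shape $(\a',\b')$. Comparing this with the minimal resolution of shape $(\a,\b)$, the non-minimal resolution must be the minimal one with extra trivial summands split off, which forces $(\a',\b')$ to be obtained from $(\a,\b)$ by adding a common sequence $\c$; that is precisely $(\a,\b) \preceq (\a',\b')$. The main obstacle is the implication $(3) \Rightarrow (4)$, which is where the geometry of the stratification does the real work. Here I would take $\E' \in \VB(\a',\b')$ lying in the closure $\overline{\VB(\a,\b)}$ and use that being in the closure means $\E'$ is a limit of bundles $\E_t \in \VB(\a,\b)$; applying \Cref{Limit2} to this degenerating family, whose fibers share the Hilbert function $H$, the Betti numbers $(\a,\b)$ of the general fiber satisfy $(\a,\b) \preceq (\a',\b')$. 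This again yields an expression $(\a',\b') = (\a,\b) + \c$, and then appending the identity on $\L(\c)$ to a minimal presentation of $\E'$ shows $\E' \in \VB(\a,\b)^{\preceq}$, so by \Cref{Orbit} and the group action, the whole stratum $\VB(\a',\b')$ sits inside $\VB(\a,\b)^{\preceq}$.

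The genuinely delicate point is making precise what membership in the closure $\overline{\VB(\a,\b)}$ supplies, since $\VB(H)$ carries only the direct-limit quotient topology rather than a scheme structure. I expect the cleanest route is to realize the closure concretely at the level of the parametrizing spaces $\M(\a',\b')$: a point of $\overline{\VB(\a,\b)}$ pulls back to a point in the closure of the locus of $\M(\a',\b')$ where the cokernel has Betti numbers $(\a,\b)$, namely the locus where the presentation splits off a copy of $\L(\c)$ and the residual map is minimal. This locus is constructible, and its closure is cut out by the vanishing of the appropriate minors controlling minimality; a point in the closure is then a genuine specialization, to which \Cref{Limit2} applies after choosing a curve through it. Once the equivalences are in hand, \Cref{GradedLattice} transports the lattice structure: the map $(\a,\b) \mapsto \overline{\VB(\a,\b)}$ is order-reversing by $(1) \Leftrightarrow (2)$, hence the closed strata form a lattice dual to $\Betti(H)$, and the intersection formula $\overline{\VB(\a,\b)} \cap \overline{\VB(\a',\b')} = \overline{\VB(\a'',\b'')}$ for the join $(\a'',\b'')$ follows because inclusion of closed strata corresponds to the reverse order, under which meet in the dual lattice is the join in $\Betti(H)$.
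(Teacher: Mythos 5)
Your overall architecture matches the paper's: (1)$\Rightarrow$(2) via \Cref{Limit}, (2)$\Rightarrow$(3) trivially, (4)$\Rightarrow$(1) by splitting a non-minimal resolution into the minimal one plus a trivial complement, and the hard direction handled by pulling the closure back to a parametrizing space of matrices and applying \Cref{Limit2}. (Your packaging of the hard step as (3)$\Rightarrow$(4) is really (3)$\Rightarrow$(1) followed by (1)$\Rightarrow$(4), which is logically equivalent to the paper's chain.) However, there is one concrete gap in that hard step: you propose to ``realize the closure concretely at the level of the parametrizing space $\M(\a',\b')$,'' describing the relevant locus as the one ``where the presentation splits off a copy of $\L(\c)$ and the residual map is minimal.'' This presupposes $(\a',\b')=(\a,\b)+\c$, i.e.\ statement (1), which is exactly what (3)$\Rightarrow$(1) must establish. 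More precisely, the preimage of $\VB(\a,\b)$ in $\M(\a',\b')$ is nonempty only when $\VB(\a,\b)\subseteq\VB(\a',\b')^{\preceq}$, which by (4)$\Leftrightarrow$(1) is equivalent to $(\a,\b)\preceq(\a',\b')$; absent that, $\M(\a',\b')$ simply does not see the stratum $\VB(\a,\b)$, so the specialization you need cannot be found there, and the argument is circular.

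The fix is the one the paper uses: set $d:=\max(\reg(\a,\b),\reg(\a',\b'))$ and let $(\a'',\b'')$ be the maximum element of $\Betti(H)_{\le d}$ from \Cref{Sublattice}. Both strata are then subspaces of $\VB(H)_{\le d}=\M(\a'',\b'')/G(\a'',\b'')$ by \Cref{Subspace}, the closure of $\VB(\a,\b)$ in $\VB(H)$ is computed there, and one takes $V\subseteq\M(\a'',\b'')$ to be the reduced preimage of $\overline{\VB(\a,\b)}$, over which the tautological family has constant Hilbert function $H$, generically Betti numbers $(\a,\b)$, and Betti numbers $(\a',\b')$ at a point over $p$; \Cref{Limit2} then gives $(\a,\b)\preceq(\a',\b')$. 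With this substitution of ambient space your argument goes through, and the deduction of the lattice statement at the end is consistent with \Cref{Main}.
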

\begin{proof}
(1) $\Longrightarrow$ (2): Suppose $(\a',\b') =(\a,\b)+\c$.
Let $\varphi \in \M^0(\a,\b)$ and $\psi \in \M^0(\a',\b')$.
Consider the line $\Phi: \A^1 \hookrightarrow \A(\a',\b')$ defined $\Phi(t) := \psi+ t\cdot \varphi'$, where $\varphi' =  \varphi \oplus \op{Id}_{\L(\c)}$.
For an open set $U\subset \A^1$ containing $0$, the image $\Phi(t)$ is contained in $\M(\a',\b')$. 
By \Cref{Limit}, the image of $\Phi(t)$ in the quotient $\VB(\a',\b')^{\preceq}$ lies in $\VB(\a,\b)$ for $t\ne 0$.
It follows that the image of $\psi$ in $\VB(\a',\b')$ is contained in the closure of $\VB(\a,\b)$ inside the space $\VB(\a',\b')^{\preceq}$.
Since $\psi$ represents an arbitrary point of $\VB(\a',\b')$, we conclude that $\VB(\a',\b')$ is contained in the closure of $\VB(\a,\b)$ in $\VB(\a',\b')^{\preceq}$,
and therefore the same is true inside $\VB(H)$. 

(2) $\Longrightarrow$ (3) is trivial.

(1) $\Longrightarrow$ (4) is proven in \Cref{Subspace}.

(3) $\Longrightarrow$ (1): Let $d := \max(\reg (\a,\b),\reg(\a',\b'))$.
Let $(\a'',\b'')$ denote the maximum element of $\Betti(H)_{\le d}$. 
Let $\pi:\M(\a'',\b'') \to \VB(H)_{\le d}$ be the quotient map and set $V$ to be the preimage of $\overline{\VB(\a,\b)}$ under $\pi$, endowed with the structure of a (reduced) subvariety of $\M(\a'',\b'')$.
Let $\E$ be the pullback of the tautological family of bundles $\E(\a'',\b'')$ on $\M(\a'',\b'')$ to $V$. 
Since $\VB(\a,\b)$ is dense in $\overline{\VB(\a,\b)}$, it follows that the fiber $\E_v$ over a general point $v\in V$ has Betti numbers $(\a,\b)$.
If $p$ is a point in $\VB(\a',\b')$ that is in the closure of $\VB(\a,\b)$, and $q$ is a point in $\pi^{-1}(p)$, then $q\in V$ and $\E_q$ has Betti numbers $(\a',\b')$.  
Finally, an application of \Cref{Limit2} to the family $\E$ gives $(\a,\b)\preceq (\a',\b')$.

(4) $\Longrightarrow$ (1): If $\E$ is a bundle with a free resolution of the form 
\[
0 \to L(\b') \to L(\a') \to H^0_*(E) \to 0,
\]
 then it contains as a summand the minimal free resolution of $\E$ 
\[
0 \to L(\b) \to L(\a) \to H^0_*(E) \to 0
\] 
with a direct complement of the form
\[
0 \to L(\c) \xrightarrow{\sim} L(\c) \to 0
\]
for some $(\a',\b') = (\a,\b)+\c$. 
It follows that $(\a,\b)\preceq (\a',\b')$.
\end{proof}

\begin{proof}[Proof of \Cref{Main}]
The first statement follows directly from \Cref{Dual}.
For the same reason, it is clear that $\overline{\VB(\a'',\b'')}$ is in the intersection of $\overline{\VB(\a,\b)}$ and $\overline{\VB(\a',\b')}$.
Let $p$ be a closed point in the intersection of $\overline{\VB(\a,\b)}$ and $\overline{\VB(\a',\b')}$.
We assume $p\in \VB(\c,\d)$ for some $(\c,\d)\in \Betti(H)$ since $\M(H)$ is the disjoint union of these subspaces. 
By \Cref{Dual}, it follows that $(\a,\b)\preceq (\c,\d)$ and $(\a',\b')\preceq(\c,\d)$. 
Since $(\a'',\b'') \preceq (\c,\d)$ by the definition of join, another application of \Cref{Dual} shows that $p\in \overline{\VB(\a'',\b'')}$.
\end{proof}

\bigskip

Last but not least, we discuss the semistable case where the description of the stratification holds within the coarse moduli space. 

\bigskip

By \cite[Theorem 4.2]{Maruyama2}, semistablity is open for a family of torsion-free sheaves. 
Furthermore, the set of semistable torsion sheaves with a given Hilbert polynomial $\chi$ is bounded in the sense of Maruyama, and thus have bounded regularity by \cite[Theorem 3.11]{Maruyama2}. 
Let $\VB(H)^{ss}$ and $\VB(\a,\b)^{ss}$ denote the subset of isomorphism classes of semistable bundles in $\VB(H)$ and $\VB(\a,\b)$ respectively.
It follows that $\VB(H)^{ss}$ and all $\VB(\a,\b)^{ss}$ are contained in $\VB(H)_{\le d}$ for some large enough integer $d$. 
Since $\VB(\a,\b)^{ss}$ is open in $\VB(\a,\b)$ and $\VB(H)^{ss}$ is open in $\VB(H)$ by the similar reasoning as in \Cref{Dense}, it follows that the stratification of $\VB(H)^{ss}$ by $\VB(\a,\b)^{ss}$ has the same description as given in \Cref{Main}.

Let $\M(\chi)$ denote the coarse moduli space of semistable sheaves on $\P^n$ with Hilbert polynomial $\chi$.
We show that the spaces $\VB(H)^{ss}$ and $\VB(\a,\b)^{ss}$ are subschemes of $\M(\chi)$.
Let $\M^0(\a,\b)^{ss}$ denote the open subscheme of $\M^0(\a,\b)$ over which the fibers of the tautological family of bundles $\E^0(\a,\b)$ are semistable.
By the property of the coarse moduli space, there is a map $p_0:\M^0(\a,\b)^{ss} \to \mathcal{M}(\chi)$ inducing the family of semistable bundles.
By \Cref{Orbit}, the isomorphism classes of the fibers are exactly given by the $G(\a,\b)$-orbits.
Therefore $\VB(\a,\b)^{ss}$ is a subscheme of $\M(\chi)$ with the image subscheme of $p$.
Similarly, the space $\VB(H)_{\le d}$ is also a subscheme of $\M(\chi)$. 
Since $\VB(H)^{ss}$ is an open subspace of $\VB(H)_{\le d}$ for some $d\gg 0$, the same is true for $\VB(H)^{ss}$.

\end{document}